 \numberwithin{equation}{section}
\newtheorem{theorem}{Theorem}[section]
\newtheorem{lemma}[theorem]{Lemma}
\newtheorem{proposition}{Proposition}[section]
\theoremstyle{definition}
\newtheorem{definition}[theorem]{Definition}
\newcommand{\us}{U_\sigma}
\newcommand{\vs}{V_\sigma}
\newcommand*{\avint}{\mathop{\ooalign{$\int$\cr$-$}}}
\newcommand{\io}{\int_{\Omega}}
\newcommand{\ioT}{\int_{\Omega_{T}}}
\newcommand{\ot}{\Omega_T }
\newcommand{\po}{\partial\Omega}
\newcommand{\ep}{\varepsilon}
\newcommand{\mdiv}{\textup{div}}
\newcommand{\pt}{\partial_t}
\newcommand{\nvb}{\|\nabla v\|_{\infty,\ot}}
\newcommand{\nvbn}{\|\nabla v\|_{\infty, \ot}}
\newcommand{\ub}{\| u\|_{\infty,\ot}}
\newcommand{\kn}{K_n}
\newcommand{\kno}{K_{n+1}}
\title[a Keller-Segel model
] 
{ Nonlinear diffusion in the Keller-Segel model of parabolic-parabolic type}
\author[Xiangsheng Xu]{}
\subjclass{Primary: 35B45, 35B65,  35Q92,  35K51.}
\keywords{The Keller-Segel model, chemotaxis equations, non-linear diffusion model, global existence, uniform gradient bounds. {\it J. Differential Equations}, to appear. 
}
\email{xxu@math.msstate.edu}
\begin{document}
	\maketitle
	
	\centerline{\scshape Xiangsheng Xu}
	\medskip
	{\footnotesize
		\centerline{Department of Mathematics \& Statistics}
		\centerline{Mississippi State University}
		\centerline{ Mississippi State, MS 39762, USA}
	} 

	\bigskip

	\begin{abstract}
	In this paper we study the initial boundary value problem for the system $u_t-\Delta u^m=-\mbox{div}(u^{q}\nabla v),\
	v_t-\Delta v+v=u$. This problem is the so-called Keller-Segel model with nonlinear diffusion. Our investigation reveals that nonlinear diffusion can prevent overcrowding. To be precise, we show that solutions are bounded as long as $m>q>0$, thereby substantially generalizing the known results in this area. Furthermore, our result seems to imply that the Keller-Segel model can have bounded solutions and blow-up ones simultaneously.
		
	\end{abstract}
	\section{Introduction}
	Theoretical and mathematical modeling of chemotaxis dates back to the works of Patlak in the 1950s \cite{P} and Keller and Segel in the 1970s \cite{KS}. The general form of the model reads:
	\begin{eqnarray}
	\partial_t u&=&\mbox{div}\left(k_1(u,v)\nabla u-k_2(u,v)\nabla v\right)+k_3(u,v)\ \ \mbox{in $\ot\equiv\Omega\times (0,T]$},\label{eu}\\
	\partial_t v&=&k_c\Delta v+k_4(u,v)\ \ \mbox{in $\ot$},\label{ev}\\
	\frac{\partial u}{\partial \mathbf{n}}&=&	\frac{\partial u}{\partial \mathbf{n}}=0\ \ \mbox{on $\Sigma_T\equiv\po\times(0,T]$},\label{uvb}\\
	(u, v)\mid_{t=0}&=&(u_0(x), v_0(x))\ \ \mbox{on $\Omega$}.\label{uvi}
	\end{eqnarray}
	Here $u$ denotes the cell density and $v$ is the concentration of the chemical signal. The function $k_1$ is the diffusivity of the cells, $k_2$ is the chemotactic sensitivity, $k_3$ describes the cell growth and death. In the signal concentration model, $k_4$ describes the net effect of the production and degradation of the chemical signal. 
As for the remaining terms in the problem,
 $\Omega$ is 
a bounded domain in $\mathbb{R}^N$ with $C^{1,1}$ boundary $\po$, $\mathbf{n}$ the unit outward normal to $\po$, and $T$ any positive number.

Motivated by applications, various assumptions on the given data  were suggested to further simplify the model \cite{HP,S}. In this paper we focus our attention on
	the so-called nonlinear-diffusion model.
	In this case, 
	\begin{equation*}
	k_1=mu^{m-1},\ \ k_2=u^{q},\ \ k_3=0, \ \ k_c=1,\ \  k_4=u-v,
	\end{equation*}
	where
	\begin{equation*}
	m, \ q\in (0,\infty).
	\end{equation*}
The resulting problem is:
\begin{eqnarray}
u_t-\Delta u^m&=&-\mdiv(u^{q}\nabla v)\ \ \mbox{in $\ot$},\label{efu}\\
v_t-\Delta v+v&=&u\ \ \mbox{in $\ot$},\label{efv}\\
	\frac{\partial u^m}{\partial \mathbf{n}}&=&	\frac{\partial v}{\partial \mathbf{n}}=0\ \ \mbox{on $\Sigma_T$},\label{uvb1}\\
(u, v)\mid_{t=0}&=&(u_0(x), v_0(x))\ \ \mbox{on $\Omega$}.\label{uvi1}
\end{eqnarray}

It is certainly beyond the scope of this paper to give a comprehensive review 
 for 
the Keller-Segel model. In this regard, we would like to refer the reader to \cite{H1,H2}. A problem similar to \eqref{efu}-\eqref{uvi1} was investigated in \cite{IY1,IY2,SK} under the assumptions that $N\geq 2, m\geq 1, q\geq 1$ (note that our $q$ here is their $q-1$). The global existence of a weak solution was established if, in addition, $m>q+1-\frac{2}{N}$. When this inequality fails, one obtains local existence and the global existence only holds for small data. H\"{o}lder continuity and uniqueness of weak solutions were considered in \cite{KL}. Some relevance of nonlinear diffusion in chemotaxis was discussed in \cite{CC}.

The objective of this paper is to show that the results in the preceding papers can be substantially improved. Before stating our results, let us define our notion of a weak solution.
\begin{definition}
	We say that $(u, v)$ is a weak solution to \eqref{efu}-\eqref{uvi1} if \begin{eqnarray*}
	u&	\in& L^\infty(\ot),\ u\geq 0,\  u^m\in L^2\left(0,T; W^{1,2}(\Omega)\right),\\
	v&\in& L^\infty(0,T; W^{1, \infty}(\Omega)),\ \ v\geq 0
	\end{eqnarray*}
	and
	\begin{eqnarray*}
	-\ioT u\pt\xi dxdt+\ioT\nabla u^m\nabla\xi dxdt&=&\io u_0\xi(x,0)dx+\ioT u^q\nabla v\nabla \xi dxdt,\\
	-\ioT v\pt\eta dxdt+\ioT\nabla v\nabla\eta dxdt&=&\io v_0\eta(x,0)dx+\ioT (u-v) \eta dxdt
	\end{eqnarray*}
	for each pair of smooth functions $(\xi,\eta)$ with $\xi(x, T)=\eta(x, T)=0$.
\end{definition}
Our main result is:
\begin{theorem}[Main theorem]\label{mth} Assume:
	\begin{enumerate}
		\item[\textup{(H1)}] $\Omega$ is a bounded domain in $\mathbb{R}^N$, $N\geq 3$, with $C^{1,1}$ boundary $\po$;
		\item[\textup{(H2)}] $u_0\in L^\infty(\Omega)$, $v_0\in W^{1,\infty}(\Omega)$ with $u_0\geq 0, v_0\geq 0$;
	\end{enumerate}
	Then there is a weak solution $(u, v)$ to \eqref{efu}-\eqref{uvi1}, provided that one of the following conditions holds.
	\begin{enumerate}
			\item[\textup{(H3)}] $m> q>0$;
		\item[\textup{(H4)}] $m>0,\  1\geq q>0$, and $q+\frac{q-1}{N+1}\leq m\leq q$.
	\end{enumerate}
\end{theorem}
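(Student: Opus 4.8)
The plan is to obtain a weak solution as a limit of solutions to regularized, non-degenerate problems, with the heart of the matter being an a priori $L^\infty$ bound on $u$ that is uniform in the regularization parameter. For the approximation, I would replace the degenerate diffusion $\Delta u^m$ by $\mdiv\big((\eps{a}(u))\nabla u\big)$ with $\eps{a}(s)=m(s+\varepsilon)^{m-1}$ (or a similar truncation that is bounded and uniformly elliptic), and likewise mollify the drift coefficient $u^q$ and the initial data; standard parabolic theory (together with the linear equation \eqref{efv} for $v$, which for bounded $u$ immediately gives $v\in L^\infty(0,T;W^{1,\infty}(\Omega))$ via $L^p$–$W^{2,p}$ estimates and Sobolev embedding) yields smooth nonnegative solutions $(\eps{u},\eps{v})$ on $\ot$. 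Nonnegativity of $u$ is preserved because the drift term vanishes where $u=0$; mass is controlled by integrating \eqref{efu}, which gives $\frac{d}{dt}\io u\,dx=0$, so $\|\eps{u}(\cdot,t)\|_{1,\Omega}=\|u_0\|_{1,\Omega}$ for all $t$.

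The key step is the iteration that upgrades the $L^1$ mass bound to an $L^\infty$ bound. I would test \eqref{efu} with $(u-k)_+^{p-1}$ for $k\ge 0$ and large $p$; the diffusion term produces a good term of order $\int \nabla\big((u-k)_+^{(m+p-1)/2}\big)^2$ after using $a(u)\ge m\,u^{m-1}$, while the drift term, after integration by parts, is estimated by $\big|\int u^{q}\nabla v\cdot\nabla (u-k)_+^{p-1}\big|$, which upon moving the gradient onto the test function and using Young's inequality is absorbed by the good term \emph{provided} one can bound $\int_{\{u>k\}} u^{2q-m+p-1}|\nabla v|^2$. Here the hypothesis $m>q$ (case (H3)) is exactly what makes the exponent $2q-m$ strictly below the "critical" scaling: combined with the $W^{1,\infty}$ bound on $v$ coming from parabolic regularity applied to \eqref{efv} with right-hand side in whatever $L^r$ space $u$ currently sits, one runs a Moser–Alikakos-type iteration on a sequence of levels $k_n\uparrow K$ and exponents $p_n\uparrow\infty$, or equivalently a De Giorgi iteration on the level sets, to conclude $\|\eps{u}\|_{\infty,\ot}\le C$ with $C$ depending only on $\|u_0\|_\infty$, $\|v_0\|_{W^{1,\infty}}$, $\Omega$, $T$, $m$, $q$, $N$. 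In the boundary case (H4), where $q\le m\le q$ forces one into a delicate regime, the same scheme is run but the exponent bookkeeping is tighter — one must exploit the room in $q+\frac{q-1}{N+1}\le m$, i.e. feed the improved integrability of $u$ back into the regularity estimate for $v$ through a bootstrap, trading powers between the Sobolev embedding applied to $u^{(m+p-1)/2}$ and the $L^p$–theory applied to $v$.

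Once the uniform $L^\infty$ bound is in hand, the remaining compactness is routine: the energy estimate (test \eqref{efu} with $u^m$, or rather with $\eps{u}$ against the regularized equation) gives a uniform bound for $\nabla \eps{u}^m$ in $L^2(\ot)$; the equation then bounds $\pt \eps{u}$ in a negative-order space, so by Aubin–Lions $\eps{u}\to u$ strongly in, say, $L^2(\ot)$ (and a.e.) along a subsequence, while $\eps{v}\to v$ strongly in $L^2(0,T;W^{1,2})$ with $\nabla\eps{v}$ weak-$\ast$ convergent in $L^\infty$. Strong convergence of $\eps{u}$ passes to the limit in the nonlinear terms $\nabla\eps{u}^m\rightharpoonup\nabla u^m$ and $\eps{u}^q\nabla\eps{v}\to u^q\nabla v$, so $(u,v)$ satisfies the weak formulation in the Definition. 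The main obstacle I anticipate is making the drift estimate close in case (H4): there the exponent $2q-m$ is not comfortably subcritical, and one needs the precise numerology $m\ge q+\frac{q-1}{N+1}$ to balance the loss in the Gagliardo–Nirenberg interpolation against the gain from the parabolic smoothing of $v$ — getting that inequality to be exactly the sharp threshold, rather than something slightly worse, is where the careful work lies.
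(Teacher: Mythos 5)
Your overall architecture (regularize, prove a uniform $L^\infty$ bound by level-set iteration, then pass to the limit by Aubin--Lions) matches the paper's, and your compactness step at the end is essentially the paper's. But there is a genuine gap at the heart of the argument: you assert that the Moser/De Giorgi iteration, ``combined with the $W^{1,\infty}$ bound on $v$,'' yields $\|u\|_{\infty,\Omega_T}\le C$ depending only on the data. The iteration by itself does not give that; it gives (as in the paper's Lemma 3.2) an estimate of the form $\|u\|_{\infty,\Omega_T}\le c\|\nabla v\|_{\infty,\Omega_T}^{\gamma}+c$ with $\gamma\to\frac{1}{m-q}$ as the exponent $s\to\infty$, while the parabolic estimate for $v$ gives $\|\nabla v\|_{\infty,\Omega_T}\le c\|\nabla v_0\|_{W^{1,\infty}}+c\|u\|_{2p,\Omega_T}$, i.e.\ a bound that is only linear in an $L^{2p}$ norm of $u$. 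When $m-q\ge 1$ Young's inequality closes the loop, but in the genuinely new regime $q<m<q+1$ (and in all of (H4)) the combined inequality is superlinear, $h\le c\,h^{1+\beta}+c$ with $h=\|v\|_{L^\infty(0,T;W^{1,\infty})}+\|u\|_{\infty}$, and such an inequality does \emph{not} imply boundedness. Your phrase ``feed the improved integrability of $u$ back into the regularity estimate for $v$ through a bootstrap'' does not resolve this: a naive bootstrap diverges precisely because the total exponent exceeds one.

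The paper's missing ingredient is its Proposition 2.2 together with a time-slicing device: because $\|u\|_{2p,\Omega\times(0,T_0)}\le c\,T_0^{1/(2p)}\|u\|_{\infty}$, working on a short interval $[0,T_0]$ puts a small factor $cT_0^{1/(2p)}$ in front of $h^{1+\beta}$; then the continuity of $\tau\mapsto h(\tau)$ and the fact that $h(0)$ lies below the critical root of $f(s)=\varepsilon s^{1+\delta}-s+b$ force $h$ to stay on the bounded branch on all of $[0,T_0]$, and the argument is restarted finitely many times to cover $[0,T]$. Nothing in your proposal plays this role, and without it the key a priori bound is not established in the cases that constitute the theorem's actual novelty. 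Two further, smaller omissions: you would also need the quantitative energy estimate (the paper's Lemma 3.1, valid only for $m>q$) to control $\iint u^{m_s}$ entering the choice of the level $K$, and under (H4) the closing exponent $\frac{N+2}{(N+1)m-(N+2)q+1}$ comes from a different branch of the recursive inequality plus interpolation with the conserved mass --- ``tighter bookkeeping'' is not a proof of either point.
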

Note that (H4) allows the possibility that $m=q=1$. This is the classical Keller-Segel system, which is well known to have  blow-up solutions. Thus our theorem actually implies that the Keller-Segel model can have bounded solutions and blow-up ones simultaneously. As far as we know, this is the first result in this direction. Our method seems to suggest that solutions blow up as $m\rightarrow q^+$, while solutions remain bounded as $m\rightarrow q^-$ with $q< 1$.
All the results are established under the assumption $N>2$. But it is not difficult to see that Theorem \ref{mth} remains true for $N=2$.

 Motivated by numerical and modeling issues, the question of how blow-up of cells can be avoided has received a lot of attention. One way of doing this is to add a cross-diffusion term to the equation for $v$ \cite{HJ}. A second way is to alter the cell diffusion \cite{BDD}. There are other related works. See, e.g., \cite{CC} in the context of volume effects. Here we show that nonlinear diffusion can also prevent blow-up.

Throughout this paper the letter $c$ is always used to represent a positive number whose value is determined by the given data. The norm of a function in $L^p(\Omega)$ is denoted by $\|\cdot\|_{p,\Omega}$. The Lebesgue measure of a set $D$ in $\mathbb{R}^N$ is represented by $|D|$.  Whenever there is no confusion, we suppress the dependence of a function on its variables, e.g., we write $u$ for $u(x,t)$.
	\section{Preliminaries}
	In this section we collect a few preparatory results.
The first one deals with sequences of non-negative numbers
which satisfy certain recursive inequalities.
\begin{proposition}\label{ynb}
	Let $\{y_n\}, n=0,1,2,\cdots$, be a sequence of positive numbers satisfying the recursive inequalities
	\begin{equation*}
	y_{n+1}\leq cb^ny_n^{1+\alpha}\ \ \mbox{for some $b>1, c, \alpha\in (0,\infty)$.}
	\end{equation*}
	If
	\begin{equation*}
	y_0\leq c^{-\frac{1}{\alpha}}b^{-\frac{1}{\alpha^2}},
	\end{equation*}
	then $\lim_{n\rightarrow\infty}y_n=0$.
\end{proposition}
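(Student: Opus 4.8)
The plan is to use the standard fast-geometric-convergence (Stampacchia-type) argument. First I would show, by induction on $n$, that the hypothesis propagates, i.e. that $y_n \le c^{-1/\alpha} b^{-1/\alpha^2} b^{-n/\alpha}$ for all $n \ge 0$; call this quantity $z_n$. Since $b>1$ we have $b^{-n/\alpha}\to 0$, so the induction claim immediately gives $y_n\to 0$. The base case $n=0$ is exactly the assumed bound on $y_0$ (with $b^{0}=1$).

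For the inductive step, assume $y_n \le z_n = c^{-1/\alpha} b^{-1/\alpha^2 - n/\alpha}$. Plug this into the recursive inequality:
\begin{equation*}
y_{n+1} \le c b^n y_n^{1+\alpha} \le c b^n \left(c^{-1/\alpha} b^{-1/\alpha^2 - n/\alpha}\right)^{1+\alpha}.
\end{equation*}
Now collect the powers of $c$ and of $b$ separately. The power of $c$ is $1 - \frac{1+\alpha}{\alpha} = -\frac{1}{\alpha}$, which is exactly the exponent we want for $z_{n+1}$. The power of $b$ is
\begin{equation*}
n + (1+\alpha)\left(-\frac{1}{\alpha^2} - \frac{n}{\alpha}\right) = n - \frac{1}{\alpha^2} - \frac{1}{\alpha} - \frac{n}{\alpha} - n = -\frac{1}{\alpha^2} - \frac{1}{\alpha} - \frac{n}{\alpha} = -\frac{1}{\alpha^2} - \frac{n+1}{\alpha},
\end{equation*}
which is precisely the exponent of $b$ in $z_{n+1}$. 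Hence $y_{n+1}\le z_{n+1}$, closing the induction.

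There is no real obstacle here; the only thing requiring care is the bookkeeping of exponents, in particular checking that the algebraic identity for the power of $b$ comes out to $-\frac{1}{\alpha^2}-\frac{n+1}{\alpha}$ and not something merely close to it — the cancellation of the $n$ terms ($n - \frac{n}{\alpha} - n = -\frac{n}{\alpha}$ combined with the shift) is what makes the geometric factor $b^{-1/\alpha}$ appear at each step. Once the induction is established, the conclusion $\lim_{n\to\infty} y_n = 0$ follows since $0 \le y_n \le c^{-1/\alpha} b^{-1/\alpha^2}\,(b^{-1/\alpha})^n$ and $b^{-1/\alpha}\in(0,1)$.
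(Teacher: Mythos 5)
Your induction is correct: the exponent bookkeeping for $c$ and $b$ checks out, and the bound $y_n\le c^{-1/\alpha}b^{-1/\alpha^2}b^{-n/\alpha}$ indeed forces $y_n\to 0$ since $b^{-1/\alpha}\in(0,1)$. The paper does not prove this proposition but cites DiBenedetto's book, and your argument is precisely the standard fast-geometric-convergence induction given there (usually phrased as $y_n\le y_0\,b^{-n/\alpha}$, which is the same computation), so there is nothing further to add.
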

This proposition can be found in (\cite{D}, p.12).

The following proposition plays a key role in the proof of our main theorem. It can be viewed as a continuous version of Lemma 3.1  in \cite{M,SMG}.
\begin{proposition}\label{prop2.2}
	Let $h(\tau)$ be a continuous non-negative function defined on $[0, T_0]$ for some $T_0>0$. Suppose that there exist three positive numbers $\ep, \delta, b $ such that
	\begin{equation}\label{f2}
	 h(\tau)\leq \ep h^{1+\delta}(\tau)+b\ \ \mbox{for each $\tau \in[0, T_0]$}.
	\end{equation}
Then
\begin{equation}
h(\tau)\leq \frac{1}{[\ep(1+\delta)]^{\frac{1}{\delta}}}\equiv s_0
\ \ \mbox{for each $\tau\in [0, T_0]$},
\end{equation}  provided that
\begin{equation}\label{f1}
\ep\leq \frac{\delta^\delta}{(b+\delta)^\delta(1+\delta)^{1+\delta}}\ \ \mbox{and} \ \ h(0)\leq
 s_0.
\end{equation}
\end{proposition}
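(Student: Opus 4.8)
The plan is to set up a continuity/connectedness argument on the set of $\tau$ where the good bound $h(\tau)\le s_0$ holds. Define $A=\{\tau\in[0,T_0]: h(\tau)\le s_0\}$. By the hypothesis $h(0)\le s_0$, the set $A$ is nonempty; since $h$ is continuous, $A$ is closed in $[0,T_0]$. The heart of the matter is to show $A$ is also open (relative to $[0,T_0]$), whence $A=[0,T_0]$ by connectedness, which is exactly the conclusion. So the real work is a quantitative gap argument: I want to show that the inequality \eqref{f2} forces $h(\tau)$ to stay bounded away from the ``bad'' branch.

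First I would analyze the scalar inequality $t\le \ep t^{1+\delta}+b$ for $t\ge 0$. Consider the function $\Phi(t)=\ep t^{1+\delta}-t+b$; the constraint \eqref{f2} says $\Phi(h(\tau))\ge 0$. The function $\Phi$ has a single interior minimum at $t_*=\big(\tfrac{1}{\ep(1+\delta)}\big)^{1/\delta}$, and one computes $\Phi(t_*)=b-\delta\, t_*$ (using $\ep t_*^{1+\delta}=t_*/(1+\delta)$, so $\ep t_*^{1+\delta}-t_*=-\tfrac{\delta}{1+\delta}t_*$, giving $\Phi(t_*)=b-\tfrac{\delta}{1+\delta}t_*$; I'll fix the exact constant in the writeup). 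The smallness assumption on $\ep$ in \eqref{f1} is precisely what makes $t_*$ large enough that $\Phi(t_*)<0$. Hence $\Phi$ has exactly two positive roots $0<s_1\le s_0\le s_2$ (here $s_0=t_*$ up to the normalization in the statement — I should reconcile $s_0$ with $t_*$; the statement's $s_0=[\ep(1+\delta)]^{-1/\delta}=t_*$, consistent), and $\Phi\ge 0$ exactly on $[0,s_1]\cup[s_2,\infty)$. Thus for every $\tau$, either $h(\tau)\le s_1$ or $h(\tau)\ge s_2$, and since $s_1<s_2$ strictly, the continuous function $h$ cannot jump between the two intervals: the set where $h\le s_1$ is clopen. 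I still owe the reconciliation that $s_1\le s_0\le s_2$ — in fact I expect the sharp statement is $h(\tau)\le s_1\le s_0$, i.e. the claimed bound $s_0$ is not optimal but suffices; I would check that $s_1\le s_0$ follows from $s_1\le t_*=s_0$ since $s_1$ is the smaller root and $t_*$ lies between the roots.

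Concretely the steps are: (i) introduce $\Phi(t)=\ep t^{1+\delta}-t+b$ and record $\Phi\circ h\ge 0$ on $[0,T_0]$; (ii) find the minimizer $t_*=s_0$ and show $\Phi(s_0)<0$ using the first inequality in \eqref{f1} (here I'd verify the constant $\tfrac{\delta^\delta}{(b+\delta)^\delta(1+\delta)^{1+\delta}}$ is exactly the threshold making $\delta\,s_0 > b$, equivalently $\Phi(s_0)<0$; note the somewhat unusual ``$b+\delta$'' suggests the author wants a clean non-strict bound, so I'd be a little careful with strict vs.\ non-strict); (iii) conclude $\{\Phi\ge 0\}\cap[0,\infty)=[0,s_1]\cup[s_2,\infty)$ with $s_1<s_0<s_2$; (iv) use $h(0)\le s_0<s_2$ to place $h(0)$ in the first interval, and invoke continuity of $h$ plus connectedness of $[0,T_0]$ to propagate $h(\tau)\le s_1\le s_0$ to all $\tau$.

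The main obstacle is step (ii): pinning down that the displayed bound on $\ep$ in \eqref{f1} is precisely (or at least sufficiently) what guarantees $\Phi(s_0)<0$, i.e. $b<\delta s_0=\delta[\ep(1+\delta)]^{-1/\delta}$, which rearranges to $\ep<\tfrac{\delta^\delta}{b^\delta(1+\delta)}$ — not literally the stated constant, so I expect the author's $(b+\delta)^\delta(1+\delta)^{1+\delta}$ in the denominator is a deliberately cruder sufficient condition (it is larger, hence a stronger hypothesis), making $\Phi(s_0)<0$ with room to spare and also perhaps ensuring $s_1\le s_0$ with a margin. I would state the clean sufficient inequality I actually use and note it is implied by \eqref{f1}. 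Everything else is a short exercise in the calculus of $\Phi$ and a one-line topological argument, so no long computation is needed.
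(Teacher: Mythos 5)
Your proposal is correct and takes essentially the same route as the paper's proof: the paper likewise introduces $f(s)=\ep s^{1+\delta}-s+b$, notes $f(h(\tau))\ge 0$, locates the minimizer $s_0=[\ep(1+\delta)]^{-1/\delta}$, uses the first inequality in \eqref{f1} to force the minimum value negative so that there are two roots $s_1<s_0<s_2$ with $f\ge 0$ only on $[0,s_1]\cup[s_2,\infty)$, and concludes via continuity/connectedness of the range of $h$ together with $h(0)\le s_0$. The one point you flagged resolves in one line, confirming your expectation that the stated constant is a sufficient (not sharp) threshold: \eqref{f1} gives $\delta\,\ep^{-1/\delta}(1+\delta)^{-(1+\delta)/\delta}\ge b+\delta$, hence $f(s_0)=b-\delta\,\ep^{-1/\delta}(1+\delta)^{-(1+\delta)/\delta}\le-\delta<0$, which is exactly the computation in the paper.
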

\begin{proof}Consider the function $f(s)=\ep s^{1+\delta}-s+b$ on $[0,\infty)$. Then condition \eqref{f2} simply says
	\begin{equation}\label{f3}
	f(h(\tau))\geq 0\ \ \mbox{for each $\tau\in [0,T_0]$.}
	\end{equation} 
It is easy to check that the function $f$ achieves its minimum value 
at $s_0=\frac{1}{[\ep(1+\delta)]^{\frac{1}{\delta}}}$. The minimum value
\begin{eqnarray}
f(s_0)&=&\frac{\ep}{[\ep(1+\delta)]^{\frac{1+\delta}{\delta}}}-\frac{1}{[\ep(1+\delta)]^{\frac{1}{\delta}}}+b\nonumber\\
&=&b-\frac{\delta}{\ep^{\frac{1}{\delta}}(1+\delta)^{\frac{1+\delta}{\delta}}}.
\end{eqnarray}	
By the first inequality in \eqref{f1}, $f(s_0)	\leq -\delta$. Consequently, the equation $f(s)=0$ has exactly two solutions $0<s_1< s_2$ with $s_0$ lying in between.
Evidently, $f$ is positive on $[0, s_1)$, negative on $(s_1,s_2)$, and positive again on $(s_2, \infty)$.
The range of $h$ is a closed interval because of its continuity, and this interval is either contained in $ [0, s_1)$ or $(s_2, \infty)$ due to \eqref{f3}. The latter cannot occur due to the second inequality in \eqref{f1}. Thus the proposition follows.
	\end{proof}
\begin{proposition}\label{nunv} Let $v$ be the solution of the problem
	\begin{eqnarray}
	v_t-\Delta v+v&=&u\ \ \mbox{in $\ot$},\label{v1}\\
		\frac{\partial v}{\partial \mathbf{n}}&=&0\ \ \mbox{on $\Sigma_T$},\label{v2}\\
 v(x,0)&=& v_0(x)\ \ \mbox{on $\Omega$}.\label{v3}
	\end{eqnarray}
	If \textup{(H1)} holds, then for each $p>\frac{N+2}{2}$ there is a positive number $c$ such that
	\begin{equation}\label{fr1}
\sup_{0\leq t\leq T}	\|\nabla v\|_{W^{1,\infty}(\Omega)}\leq c\|\nabla v_0\|_{W^{1,\infty}(\Omega)}+c\|u\|_{2p,\ot}.
	\end{equation}
\end{proposition}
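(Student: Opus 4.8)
The plan is to represent $v$ by the variation-of-constants formula for \eqref{v1}--\eqref{v3} relative to the Neumann heat semigroup on $\Omega$, and then to read off \eqref{fr1} from the $L^q$--$L^\infty$ smoothing bounds for that semigroup. Writing $S(\tau)$, $\tau\ge 0$, for the semigroup generated by $\Delta$ on $\Omega$ subject to the homogeneous Neumann condition, the solution of \eqref{v1}--\eqref{v3} is
\begin{equation*}
v(\cdot,t)=e^{-t}S(t)v_0+\int_0^t e^{-(t-s)}S(t-s)u(\cdot,s)\,ds=:v_1(\cdot,t)+v_2(\cdot,t),\qquad 0\le t\le T.
\end{equation*}
Since $\po$ is $C^{1,1}$, the Neumann heat kernel $p(\tau,x,y)$ of $\Omega$ obeys Gaussian upper bounds together with the companion gradient bound $|\nabla_x p(\tau,x,y)|\le c\,\tau^{-1/2-N/2}e^{-c|x-y|^2/\tau}$ for $0<\tau\le T$, and from these one obtains, for $1\le q\le\infty$,
\begin{equation*}
\|S(\tau)w\|_{\infty,\Omega}\le c\,\tau^{-\frac{N}{2q}}\|w\|_{q,\Omega},\qquad \|\nabla S(\tau)w\|_{\infty,\Omega}\le c\,\tau^{-\frac12-\frac{N}{2q}}\|w\|_{q,\Omega};
\end{equation*}
moreover $S(\tau)$ is a contraction on $C(\oo)$ and, uniformly in $\tau$, bounded on $W^{1,\infty}(\Omega)$ (the $C^{1,1}$ regularity entering through a localisation to the half-space, where the bound follows from even reflection), so that $\|S(\tau)w\|_{\infty,\Omega}\le\|w\|_{\infty,\Omega}$ and $\|\nabla S(\tau)w\|_{\infty,\Omega}\le c\|w\|_{W^{1,\infty}(\Omega)}$.

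The term $v_1$ is controlled at once by the last two bounds: $\sup_{0\le t\le T}\|v_1(\cdot,t)\|_{W^{1,\infty}(\Omega)}\le c\|v_0\|_{W^{1,\infty}(\Omega)}$. For $v_2$ I would take the $L^\infty_x$ norm inside the integral, apply the smoothing estimates with $q=2p$, and then use H\"{o}lder's inequality in $s$ with the conjugate pair $\bigl((2p)',2p\bigr)$, where $(2p)'=\tfrac{2p}{2p-1}$:
\begin{align*}
\|\nabla v_2(\cdot,t)\|_{\infty,\Omega}&\le c\int_0^t e^{-(t-s)}(t-s)^{-\frac12-\frac{N}{4p}}\|u(\cdot,s)\|_{2p,\Omega}\,ds\\
&\le c\Bigl(\int_0^\infty e^{-(2p)'r}\,r^{-(2p)'(\frac12+\frac{N}{4p})}\,dr\Bigr)^{1/(2p)'}\|u\|_{2p,\ot},
\end{align*}
and, in exactly the same way, $\|v_2(\cdot,t)\|_{\infty,\Omega}\le c\bigl(\int_0^\infty e^{-(2p)'r}r^{-(2p)'N/(4p)}dr\bigr)^{1/(2p)'}\|u\|_{2p,\ot}$. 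The first of these time integrals is finite if and only if $(2p)'\bigl(\tfrac12+\tfrac{N}{4p}\bigr)<1$, and a one-line manipulation shows this is the same as $2p>N+2$, i.e.\ as the hypothesis $p>\tfrac{N+2}{2}$; the second integral converges under the weaker condition $p>\tfrac{N+2}{4}$, which is then automatic. Adding the two contributions and taking $\sup_{0\le t\le T}$ gives $\sup_{0\le t\le T}\|v(\cdot,t)\|_{W^{1,\infty}(\Omega)}\le c\|v_0\|_{W^{1,\infty}(\Omega)}+c\|u\|_{2p,\ot}$, which is \eqref{fr1}.

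The heart of the argument — and the point that isolates the exponent $\tfrac{N+2}{2}$ — is the integrability near $s=t$ of the weight $(t-s)^{-1/2-N/(4p)}$ carried by $\nabla S(t-s)$: it lies in $L^{(2p)'}$ in time precisely when $q=2p$ is strictly above $N+2$ (on the finite interval $[0,T]$ the factor $e^{-(t-s)}$ is immaterial and may be dropped). The only other, purely technical, ingredient is the validity of the smoothing bounds and of the uniform $W^{1,\infty}$-boundedness of $S(\tau)$ on a domain that is merely $C^{1,1}$; both are classical once the Gaussian bounds for the Neumann heat kernel and its spatial gradient on such domains are in hand. One may also avoid heat-kernel bounds altogether and argue through parabolic $L^q$ maximal regularity: for $v_2$, which has zero initial data, one has $\|v_{2,t}\|_{2p,\ot}+\|\nabla^2 v_2\|_{2p,\ot}+\|v_2\|_{2p,\ot}\le c\|u\|_{2p,\ot}$, and the anisotropic Sobolev embedding $W^{2,1}_{2p}(\ot)\hookrightarrow C^{1+\beta,(1+\beta)/2}(\overline{\ot})$ with $\beta=1-\tfrac{N+2}{2p}>0$ — available precisely because $2p>N+2$ — bounds $\sup_{0\le t\le T}\|v_2(\cdot,t)\|_{W^{1,\infty}(\Omega)}$ by $c\|u\|_{2p,\ot}$; together with the standard parabolic boundary estimate for the homogeneous problem $v_1$ this reproduces \eqref{fr1}. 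In either route \eqref{fr1} is to be read as the assertion that $v$ and $\nabla v$ are bounded on $\ot$ with the stated dependence on $\|v_0\|_{W^{1,\infty}(\Omega)}$ and $\|u\|_{2p,\ot}$: the second derivatives of $v$ cannot be dominated by $\|u\|_{2p,\ot}$, since for $\nabla^2 S(\tau)$ the Duhamel weight $(t-s)^{-1-N/(2q)}$ fails to be integrable at $s=t$ for every finite $q$, and maximal regularity places $\nabla^2 v$ only in $L^{2p}(\ot)$.
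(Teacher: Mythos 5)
Your argument is correct in substance, but it follows a genuinely different route from the paper. The paper never invokes the Neumann heat semigroup: it runs a De Giorgi level-set iteration directly on $w=v_{x_i}$, i.e.\ local Caccioppoli-type energy estimates on shrinking parabolic cylinders combined with the recursive-sequence Proposition \ref{ynb}, giving the interior bound \eqref{fr}; the boundary case is handled by flattening $\po$ with a $C^{1,1}$ diffeomorphism chosen so that the transformed derivative vanishes on the flat portion, the initial-time case by taking the truncation level $K\geq 2\|\nabla v_0\|_{\infty,\Omega}$, and $\|v\|_{\infty,\ot}$ by the classical $L^\infty$ theory. Your Duhamel-plus-smoothing argument (or the maximal-regularity variant with the embedding $W^{2,1}_{2p}\hookrightarrow C^{1+\beta,(1+\beta)/2}$ for $2p>N+2$) is shorter and pinpoints transparently where the exponent $\frac{N+2}{2}$ comes from, but it imports nontrivial machinery whose only delicate point is exactly the one you flag: gradient Gaussian bounds for the Neumann heat kernel, uniform $W^{1,\infty}$-boundedness of $S(\tau)$, or $W^{2,1}_{2p}$ maximal regularity on a domain that is merely $C^{1,1}$; these are available but need a citation or a localization argument, whereas the paper's iteration is self-contained and uses only Sobolev embedding and energy estimates (which is presumably why the author says no good reference was found and gives a direct proof). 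Your reading of \eqref{fr1} is also the right one and agrees with what the paper actually proves and later uses (a bound on $\sup_t\|v\|_{W^{1,\infty}(\Omega)}$ in terms of $\|v_0\|_{W^{1,\infty}(\Omega)}$ and $\|u\|_{2p,\ot}$): neither your argument nor the paper's controls second spatial derivatives of $v$ in $L^\infty$, and as you note no such control can follow from $\|u\|_{2p,\ot}$ alone, so the literal $\|\nabla v\|_{W^{1,\infty}(\Omega)}$, $\|\nabla v_0\|_{W^{1,\infty}(\Omega)}$ in the statement should be read as $\|v\|_{W^{1,\infty}(\Omega)}$, $\|v_0\|_{W^{1,\infty}(\Omega)}$.
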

\begin{proof}We do not believe that this result is new. However, we cannot find a good reference to it. So we offer a proof here.
	First we obtain a local interior estimate. The boundary estimate is achieved by flattening the relevant portion of the boundary.
	
	 Now fix a point $z_0=(x_0,t_0)\in \ot$. Then pick a number $R$ from $(0,\min\{\mbox{dist}(x_0,\partial\Omega),\sqrt{t_0}\})$. Define a sequence of cylinders $Q_{R_n}(z_0)$ in $\ot$ as follows:
	\begin{equation*}
	Q_{R_n}(z_0)=B_{R_n}(x_0)\times(t_0-R_n^2, t_0],
	\end{equation*}
	where
	\begin{equation*}
	R_n=\frac{ R}{2}+\frac{R}{2^{n+1}}\ \,\ n=0,1,2,\cdots.
	\end{equation*}
	Choose a sequence of smooth functions $\theta_n$ so that
	\begin{eqnarray*}
		\theta_n(x,t)&=& 1 \ \ \mbox{in $Q_{R_n}(z_0)$},\\
		\theta_n(x,t)&=&0\ \ \mbox{outside $B_{R_{n-1}}(x_0)$ and $t<t_0-R_{n-1}^2$},\\
		|\partial_t\theta_n(x,t)&|\leq &\frac{c4^n}{R^2}\ \ \mbox{on $Q_{R_{n-1}}(z_0)$},\\
		|\nabla \theta_n(x,t)|&\leq & \frac{c2^n}{R}\ \ \mbox{on $Q_{R_{n-1}}(z_0)$,}\ \ \ \mbox{and}\\
		0&\leq &\theta_n(x,t)\leq 1\ \ \mbox{on $Q_{R_{n-1}}(z_0)$.}
	\end{eqnarray*}
	Let $p$ be given as in the lemma.
Select
	\begin{equation}\label{conk1}
	K\geq R^{1-\frac{N+2}{2p}}\|u\|_{2p, Q_R(z_0)}
	\end{equation}
	as below.
	Set
	\begin{equation*}
	K_n=K-\frac{K}{2^{n+1}},\ \ \ n=0,1,2,\cdots.
	\end{equation*}
	Fix an $i\in \{1,\cdot, N\}$. Define
	\begin{equation}\label{coni}
	w=v_{x_i}.
	\end{equation}
Then $w$ satisfies the equation
	\begin{equation}
	w_t-\Delta w+w=u_{x_i}\ \ \mbox{in $\ot$.}\label{w1}
	\end{equation}
	Without loss of generality, assume $\sup_{\ot}w=\|w\|_{\infty,\ot}$.
	We use $\theta_{n+1}^2(w-K_{n+1})^+$ as a test function in \eqref{w1} to derive
	\begin{eqnarray}
	\lefteqn{\frac{1}{2}\frac{d}{dt}\io \theta_{n+1}^2\left[(w-K_{n+1})^+\right]^2dx+\io\theta_{n+1}^2|\nabla(w-K_{n+1})^+|^2dx+\io w\theta_{n+1}^2(w-K_{n+1})^+dx}\nonumber\\
	&=&\io\theta_{n+1}\pt\theta_{n+1}\left[(w-K_{n+1})^+\right]^2dx-2\io\theta_{n+1}\nabla\theta_{n+1}\nabla w(w-K_{n+1})^+dx\nonumber\\
	&&-\io u\theta_{n+1}^2\partial_{x_i}(w-K_{n+1})^+-2\io u\theta_{n+1}\partial_{x_i}\theta_{n+1}(w-K_{n+1})^+dx,
	\end{eqnarray}
	from whence follows
	\begin{eqnarray}
	\lefteqn{\sup_{0\leq t\leq t_0}\io \theta_{n+1}^2\left[(w-K_{n+1})^+\right]^2dx+\int_{0}^{t_0}\io\theta_{n+1}^2|\nabla(w-K_{n+1})^+|^2dxdt}\nonumber\\
	&\leq &\frac{c4^n}{R^2}\int_{Q_{R_n}(z_0)}\left[(w-K_{n+1})^+\right]^2dxdt+c\int_{A_{n+1}} u^2\theta_{n+1}^2dxdt\nonumber\\
		&\leq &\frac{c4^n}{R^2}y_n+c\|u^2\|_{p, Q_R(z_0)}|	A_{n+1}|^{1-\frac{1}{p}},
	\end{eqnarray}
	where
	\begin{eqnarray}
	y_n&=&\int_{Q_{R_n}(z_0)}\left[(w-K_{n})^+\right]^2dxdt,\\
	A_{n+1}&=&\{(x,t)\in Q_{R_n}(z_0): w(x,t)\geq K_{n+1}\}.
	\end{eqnarray}
	By Poincar\'{e}'s inequality,
	\begin{eqnarray}
	\lefteqn{\int_{0}^{t_0}\io \left[\theta_{n+1}(w-K_{n+1})^+\right]^{\frac{4}{N}+2}dxdt}\nonumber\\
	&\leq &\int_{0}^{t_0}\left(\io\left[\theta_{n+1}(w-K_{n+1})^+\right]^2dx\right)^{\frac{2}{N}}\left(\io \left[\theta_{n+1}(w-K_{n+1})^+\right]^{\frac{2N}{N-2}}\right)^{\frac{N-2}{N}}dt\nonumber\\
	&\leq &\left(\sup_{0\leq t\leq t_0}\io\left[\theta_{n+1}(w-K_{n+1})^+\right]^2dx\right)^{\frac{2}{N}}\int_{0}^{t_0}\io \left|\nabla\left(\theta_{n+1}(w-K_{n+1})^+\right)\right|^2dxdt\nonumber\\
	&\leq&c\left(1+4^n\right)\left(\frac{c4^n}{R^2}y_n+c\|u^2\|_{p, Q_R(z_0)}|	A_{n+1}|^{1-\frac{1}{p}}\right)^{\frac{N+2}{N}}.
	\end{eqnarray}
	Subsequently,
	\begin{eqnarray}
	y_{n+1}&=&\int_{Q_{R_{n+1}}(z_0)}\left[(w-K_{n+1})^+\right]^2dxdt\nonumber\\
	&\leq &\int_{0}^{t_0}\io \left[\theta_{n+1}(w-K_{n+1})^+\right]^{2}dxdt\nonumber\\
	&\leq &\left(\int_{0}^{t_0}\io \left[\theta_{n+1}(w-K_{n+1})^+\right]^{2\frac{N+2}{N}}dxdt\right)^{\frac{N}{N+2}}|A_{n+1}|^{\frac{2}{N+2}}\nonumber\\
	&\leq &c4^{\frac{Nn}{N+2}}\left(\frac{c4^n}{R^2}y_n+c\|u^2\|_{p, Q_R(z_0)}|	A_{n+1}|^{1-\frac{1}{p}}\right)|A_{n+1}|^{\frac{2}{N+2}}\nonumber\\
	&\leq &c4^{\frac{Nn}{N+2}}\left(\frac{c4^n}{R^2}y_n+cR^{\frac{N+2}{p}-2}K^{2}|	A_{n+1}|^{1-\frac{1}{p}}\right)|A_{n+1}|^{\frac{2}{N+2}}.\label{r1}
	\end{eqnarray}
	The last step is due to \eqref{conk1}.
	We also have
	\begin{equation}
	y_{n}\geq \int_{A_{n+1}}\left(K_{n+1}-K_n\right)^2dxdt=\frac{K^2}{4^{n+1}}|A_{n+1}|.
	\end{equation}
It immediately follows that
	\begin{eqnarray}
	y_n|A_{n+1}|^{\frac{2}{N+2}}&=&	y_n|A_{n+1}|^{\frac{1}{p}}|A_{n+1}|^{\frac{2}{N+2}-\frac{1}{p}}\nonumber\\
	&\leq&\frac{cR^{\frac{N+2}{p}}4^{\frac{(n+1)[2p-(N+2)]}{p(N+2)}}}{K^{2\frac{2p-(N+2)}{p(N+2)}}} y_n^{1+\frac{2p-(N+2)}{p(N+2)}},\\
K^{2}|	A_{n+1}|^{1-\frac{1}{p}}|A_{n+1}|^{\frac{2}{N+2}}
	&=&K^{2}|	A_{n+1}||A_{n+1}|^{\frac{2}{N+2}-\frac{1}{p}}\nonumber\\
	&\leq&\frac{c4^{n+1+\frac{(n+1)[2p-(N+2)]}{p(N+2)}}}{K^{2\frac{2p-(N+2)}{p(N+2)}}} y_n^{1+\frac{2p-(N+2)}{p(N+2)}}.
	\end{eqnarray}
	Use these in \eqref{r1} to derive
	\begin{eqnarray}
		y_{n+1}&\leq &\frac{cb^nR^{\frac{N+2}{p}-2}}{K^{2\frac{2p-(N+2)}{p(N+2)}}}y_n^{1+\frac{2p-(N+2)}{p(N+2)}}.
	\end{eqnarray}
	By Proposition \ref{ynb}, if we choose $K$ so large that
	\begin{equation}
	y_0\leq cK^{2}R^{N+2},
	\end{equation}
	then
	\begin{equation}
	\sup_{Q_{\frac{R}{2}}(z_0)}w\leq K.
	\end{equation}
	In view of \eqref{conk1}, it is enough for us to take
	\begin{eqnarray}
	K&=&c\left(\frac{y_0}{R^{N+2}}\right)^{\frac{1}{2}}+R^{1-\frac{N+2}{2p}}\|u\|_{2p, Q_R(z_0)}.
	\end{eqnarray}
	Recall that 
	\begin{equation}
	y_0=\int_{Q_{R}(z_0)}\left[\left(w-\frac{K}{2}\right)^+\right]^2dxdt\leq \int_{Q_{R}(z_0)}\left(w^+\right)^2dxdt.
	\end{equation}
	Hence,
	\begin{equation}\label{fr}
	\sup_{Q_{\frac{R}{2}}(z_0)}w\leq c\left(\avint_{Q_{R}(z_0)}\left(w^+\right)^2dxdt\right)^{\frac{1}{2}}+R^{1-\frac{N+2}{2p}}\|u\|_{2p, Q_R(z_0)}.
	\end{equation}
		This is the so-called local interior estimate. Now we proceed to derive the boundary estimate.
	Suppose $x_0\in \po$. Our assumption on the boundary implies that there exist a neighborhood $U(x_0)$ of $x_0$ and a $C^{1,1}$ diffeomorphism $\mathbb{T}$ defined on $U(x_0)$ such that the image of $U(x_0)\cap\Omega$ under $\mathbb{T}$ is the half ball $B_\delta^+(y_0)=\{y: |y-y_0|<\delta, y_i>0\} $, where $\delta>0,  y_0=\mathbb{T}(x_0)$, and $i$ is given as in \eqref{coni}. This implies that we have flatten $U(x_0)\cap\po$ into a region
	in the plane $y_i=0$ in the $y$ space \cite{CFL}.
	Set
	\begin{equation*}
	\tilde{v}=v\circ\mathbb{T}^{-1}, \ \ \tilde{w}=\tilde{v}_{y_i}.
	\end{equation*}
	We can choose $\mathbb{T}$ so that $\tilde{w}=\tilde{w}(y,t)$  satisfies the boundary condition
	\begin{equation}\label{bc}
	\tilde{w}\mid_{y_i=0}=\tilde{v}_{y_i}\mid_{y_i=0}=\frac{\partial\tilde{v}}{\partial\mathbf{n}}\mid_{y_i=0}=0.
	\end{equation}
	One way of doing this is to pick $\mathbb{T}=\left(\begin{array}{c}
	f_1(x)\\
	\vdots\\
	f_N(x)
	\end{array}\right)$ so that the graph of $f_1(x)=0$ is $U(x_0)\cap\po$ and the set of vectors $\{\nabla f_1, \cdots,\nabla f_N\}$ is orthogonal.
	By a result in \cite{X8}, $\tilde{w}$ satisfies the equation
	\begin{equation*}
\pt\tilde{w}	-\mbox{div}\left[(J_{\mathbb{T}}^TJ_{\mathbb{T}})\circ \mathbb{T}^{-1}\nabla\tilde{w}\right]+\tilde{w}=(\mathbf{h}J_{\mathbb{T}})\circ \mathbb{T}^{-1}\nabla\tilde{w}+\left(J_{\mathbb{T}}\circ \mathbb{T}^{-1}\nabla\tilde{u}\right)_{i}\ \ \mbox{in $B_\delta^+(y_0)$},
	\end{equation*}
	where $J_{\mathbb{T}}$ is the Jacobian matrix of $\mathbb{T}$, i.e., 
	\begin{equation*}
	J_{\mathbb{T}}=\nabla\mathbb{T}, 
	\end{equation*} 
	$\left(J_{\mathbb{T}}\circ \mathbb{T}^{-1}\nabla\tilde{u}\right)_{i}$ is the i-th component of the vector $J_{\mathbb{T}}\circ \mathbb{T}^{-1}\nabla\tilde{u}$, and the row vector $\mathbf{h}$ is roughly $\mdiv(J_{\mathbb{T}}^TJ_{\mathbb{T}})$
	 and is, therefore, bounded by our assumption 
	on $\mathbb{T}$. In view of \eqref{bc}, the method employed to prove \eqref{fr} still works here. The only difference is that we use $B_{R_n}^+(y_0)$ instead of $B_{R_n}(y_0)$ in the proof.
	If $t_0=0$, then we just need to change $Q_{R_n}(z_0)$ to $B_{R_n}\times[0, R_n^2)$ and require 
	\begin{equation*}
	K\geq 2\|\nabla v_0\|_{\infty,\Omega},
	\end{equation*}
	in addition to \eqref{conk1} in the proof . Subsequently, \eqref{fr} follows.
	
	Finally, use $v$ as a test function in \eqref{aefv} to derive
	\begin{equation}
\frac{1}{2}	\frac{d}{dt}\io v^2+\io|\nabla v|^2dx+\io v^2dx=\io uv dx.
	\end{equation}
	It immediately follows that
	\begin{equation}
	\ioT|\nabla v|^2dxdt\leq c\ioT u^2dxdt+c\io v_0^2dx.
	\end{equation} 
	The classical $L^\infty$ estimate for linear parabolic equations asserts
	\begin{equation}
	\|v\|_{\infty,\ot}\leq c	\|u\|_{p,\ot}+\|v_0\|_{\infty,\Omega}.
	\end{equation}This completes the proof.


\end{proof}

\section{Proof of Theorem \ref{mth}}
A solution to \eqref{efu}-\eqref{uvi1} is constructed as the limit of a sequence of approximate solutions. Our approximate problems are formulated as follows (also see \cite{SK}):
\begin{eqnarray}
	\pt U-m\mdiv\left((U^++\sigma)^{m-1}\nabla U\right)&=&-\mdiv\left((U^+)^q\nabla V\right)\ \ \mbox{in $\ot$,}\label{cuv1}\\
	\pt V-\Delta V+V&=&U\ \ \mbox{in $\ot$,}\label{cuv2}\\
	\frac{\partial U}{\partial\mathbf{n}}&=&\frac{\partial V}{\partial\mathbf{n}}=0\ \ \mbox{on $\Sigma_T$,}\label{cuv3}\\
	(U,V)\mid_{t=0}&=&(u_0, v_0)\ \ \mbox{on $\Omega$,}\label{cuv4}
\end{eqnarray}
where $\sigma>0$.
The existence of a solution to the above problem can be established via the Leray-Schauder fixed point theorem (\cite{GT}, p.280). To this end, we define an operator $\mathbb{T}$: $L^\infty(\ot)\rightarrow L^\infty(\ot)$ as follows: Let $U\in L^\infty(\ot)$. We say $w=\mathbb{T}(U)$ if $w$ is the unique solution of the problem 
\begin{eqnarray*}
	\pt w-m\mdiv\left((U^++\sigma)^{m-1}\nabla w\right)&=&-\mdiv\left((U^+)^q\nabla V\right)\ \ \mbox{in $\ot$,}\\
	\frac{\partial w}{\partial\mathbf{n}}&=&0\ \ \mbox{on $\Sigma_T$,}\\
	w\mid_{t=0}&=&u_0\ \ \mbox{on $\Omega$,}
\end{eqnarray*}
where $V$ solves the problem
\begin{eqnarray*}
	\pt V-\Delta V+V&=&U\ \ \mbox{in $\ot$,}\\
	\frac{\partial V}{\partial\mathbf{n}}&=&0\ \ \mbox{on $\Sigma_T$,}\\
	V\mid_{t=0}&=&v_0\ \ \mbox{on $\Omega$.}
\end{eqnarray*}
To see that 
$\mathbb{T}$ is well-defined, we conclude from Proposition \ref{nunv} that
$|\nabla V|\in L^\infty(\ot)$. Moreover, the two initial boundary value problems in the definition of $\mathbb{T}$ are both linear and uniformly parabolic. We can infer from (\cite{LSU}, Chap. III) that $w$ is H\"{o}lder continuous in $\overline{\ot}$.  It follows
that $\mathbb{T}$ is continuous and maps bounded sets into precompact ones. We still need to show that there is a positive number c such that 
\begin{equation}\label{uub}
\|U\|_{\infty,\ot}\leq c
\end{equation}
for all $U\in L^\infty(\ot)$ and $\eta\in (0,1)$ satisfying $U=\eta\mathbb{T}(U)$. This equation is equivalent to the following problem
\begin{eqnarray}
\pt U-m\mdiv\left((U^++\sigma)^{m-1}\nabla U\right)&=&-\eta\mdiv\left((U^+)^q\nabla V\right)\ \ \mbox{in $\ot$,}\label{ex1}\\
\pt V-\Delta V+V&=&U\ \ \mbox{in $\ot$,}\\
\frac{\partial U}{\partial\mathbf{n}}&=&\frac{\partial V}{\partial\mathbf{n}}=0\ \ \mbox{on $\Sigma_T$,}\\
(U,V)\mid_{t=0}&=&(\eta u_0, v_0)\ \ \mbox{on $\Omega$.}\label{ex4}
\end{eqnarray}
Use $U^-$ as a test function in \eqref{ex1} to get
\begin{equation*}
-\frac{1}{2}\frac{d}{dt}\io(U^-)^2dx-m\io(U^++\sigma)^{m-1}|\nabla U^-|^2dx=0.
\end{equation*}
Integrate to get
\begin{equation}\label{usp}
U\geq 0\ \ \mbox{a.e. on $\ot$.}
\end{equation}
This implies that 
\begin{equation}\label{vsp}
V\geq 0\ \ \mbox{a.e. on $\ot$.}
\end{equation}
We introduce the following change of dependent variables
\begin{equation}
u=U+\sigma,\ \ v=V+\sigma.
\end{equation}
Then $(u,v)$ satisfies the problem
\begin{eqnarray}
u_t-\Delta u^m&=&-\eta\mdiv(u-\sigma)^{q}\nabla v)\ \ \mbox{in $\ot$},\label{aefu}\\
v_t-\Delta v+v&=&u\ \ \mbox{in $\ot$},\label{aefv}\\
\frac{\partial u}{\partial \mathbf{n}}&=&	\frac{\partial v}{\partial \mathbf{n}}=0\ \ \mbox{on $\Sigma_T$},\label{auvb1}\\
(u, v)\mid_{t=0}&=&(\eta u_0(x)+\sigma, v_0(x)+\sigma)\ \ \mbox{on $\Omega$}.\label{auvi1}
\end{eqnarray}

There is no loss of generality for us to assume that $T\leq 1$. Otherwise, we simply consider $(u(x, Tt), v(x, Tt))$ on $[0,1]$. From here on we will do that, and also let
\begin{equation}
 \sigma\in (0,1).
\end{equation}
We already have $ \eta\in (0,1)$. The generic positive number $c$ will be independent of all three of them.

\begin{lemma}\label{uint} Let \textup{(H3)} hold.
	 Then for each $s$ sufficiently large there is a positive number $c$ such that
	\begin{equation}\label{b12}
	\sup_{0\leq t\leq T}\io u^{s+1}dx+\ioT\left|\nabla u^{\frac{m+s}{2}}\right|^2dxdt
\leq c\nvb^{\frac{m+s}{m-q}}+c.
	\end{equation}
	\end{lemma}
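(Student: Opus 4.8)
The plan is to test the equation \eqref{aefu} with $u^s$ and run the usual energy estimate, exploiting the structural hypothesis $m>q$ to absorb the chemotaxis term. First I would multiply \eqref{aefu} by $u^s$ (with $s$ large) and integrate over $\Omega$, obtaining on the left-hand side
\begin{equation*}
\frac{1}{s+1}\frac{d}{dt}\io u^{s+1}dx+\frac{4ms}{(m+s)^2}\io\left|\nabla u^{\frac{m+s}{2}}\right|^2dx,
\end{equation*}
and on the right-hand side $\eta s\io u^{s-1}(u-\sigma)^q\nabla v\cdot\nabla u\,dx$. Since $0\le u-\sigma\le u$, I bound $(u-\sigma)^q\le u^q$, so the right side is controlled by $c\nvb\io u^{s-1+q}|\nabla u|\,dx$, which after rewriting $|\nabla u|$ in terms of $\nabla u^{\frac{m+s}{2}}$ becomes $c\nvb\io u^{\frac{2q+s-m}{2}}\left|\nabla u^{\frac{m+s}{2}}\right|dx$.

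Next I would apply Young's inequality to split off a small multiple of $\io\left|\nabla u^{\frac{m+s}{2}}\right|^2dx$ to be absorbed on the left, leaving a term of the form $c\nvb^2\io u^{2q+s-m}dx$. The key point is that $m>q$ forces $2q+s-m<s+1$ (for the exponent comparison one needs $s$ large enough relative to $q-m$, hence the ``$s$ sufficiently large'' hypothesis), so by an interpolation/Young argument $\io u^{2q+s-m}dx\le \tfrac12\io u^{s+1}dx+c\nvb^{\gamma}+c$ for a suitable power $\gamma$; matching exponents one finds $\gamma=\frac{m+s}{m-q}$ after dividing through by the appropriate power of $\nvb$, which is exactly the exponent appearing in \eqref{b12}. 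Here one must be slightly careful: if $\nvb$ is large the interpolation constant picks up a power of $\nvb$, and tracking it gives the stated $\nvb^{\frac{m+s}{m-q}}$; if $\nvb\le 1$ the bound is trivially absorbed into the additive $c$. One also needs $u$ bounded in some fixed Lebesgue space uniformly in $\sigma,\eta,t$ to start the interpolation — this is $L^1$ control, which follows from integrating \eqref{aefu} in $x$ (the chemotaxis term is a divergence, the diffusion term is a divergence, so $\frac{d}{dt}\io u\,dx=0$ up to the boundary terms) together with the initial data bound in (H2), so $\io u\,dx\le c$ for all $t$.

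Then I would integrate the resulting differential inequality $\frac{d}{dt}\io u^{s+1}dx+\io\left|\nabla u^{\frac{m+s}{2}}\right|^2dx\le c\nvb^{\frac{m+s}{m-q}}+c$ over $[0,t]\subset[0,T]$ with $T\le 1$; the initial term $\io(\eta u_0+\sigma)^{s+1}dx$ is bounded by $c$ since $u_0\in L^\infty$ and $\sigma,\eta<1$, and taking the supremum over $t$ yields \eqref{b12}.

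The main obstacle I anticipate is bookkeeping the dependence on $\nvb$ through the interpolation step so that the final exponent comes out to be precisely $\frac{m+s}{m-q}$ rather than something larger — in particular, making sure the absorption of $\io\left|\nabla u^{\frac{m+s}{2}}\right|^2dx$ is done with a constant that is genuinely independent of $\sigma$, $\eta$, $t$, and $\nvb$, and that the interpolation exponent relating $\|u\|_{2q+s-m}$ to $\|u\|_{s+1}$ and $\|u\|_1$ (via Gagliardo–Nirenberg, using the gradient term) is computed correctly. A secondary point to watch is the borderline algebra forcing ``$s$ sufficiently large'': one needs $2q+s-m\ge 0$ and the interpolation exponents to lie in the admissible range, both of which hold once $s$ exceeds an explicit threshold depending only on $m,q,N$.
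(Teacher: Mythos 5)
Your opening matches the paper: test \eqref{aefu} with $u^s$, use $(u-\sigma)^q\le u^q$, absorb half of the diffusion term, and invoke mass conservation $\io u\,dx\le c$; the problem is reduced, exactly as in the paper, to controlling $c\nvb^2\io u^{2q-m+s}dx$. But your treatment of that term has a genuine gap. You claim that $m>q$ forces $2q+s-m<s+1$ and then interpolate $L^{2q+s-m}$ between $L^1$ and $L^{s+1}$. The inequality $2q+s-m<s+1$ is equivalent to $m>2q-1$, which does \emph{not} follow from (H3): for instance $q=2$, $m=2.1$ satisfies $m>q$ but $2q+s-m=s+1.9>s+1$, and then $\io u^{2q+s-m}dx$ simply cannot be bounded by $\io u^{s+1}dx$ and $\io u\,dx$ (it is a strictly higher power of $u$), so your interpolation step is unavailable precisely in the regime $q>1$ that (H3) allows. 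The paper's proof avoids this by not interpolating against $\io u^{s+1}dx$ at all: it applies the Sobolev inequality \eqref{set} to $w=u^{\frac{m+s}{2}}$ to get $\io u^{2q-m+s}dx\le c\bigl(\io|\nabla u^{\frac{m+s}{2}}|^2dx\bigr)^{\frac{2q-m+s}{m+s}}+c$ (the lower-order $\bigl(\io u^{\frac{m+s}{2}}dx\bigr)^2$ term being handled by interpolation with the conserved $L^1$ norm), and then absorbs this into the dissipation term on the left by Young's inequality; the hypothesis $m>q$ enters only through $\frac{2q-m+s}{m+s}<1$, and the conjugate exponent $\frac{m+s}{2(m-q)}$ applied to $\nvb^2$ is what produces exactly the power $\nvb^{\frac{m+s}{m-q}}$ in \eqref{b12}.

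Two further points, even in the sub-case $2q-m<1$ where your interpolation is admissible. First, you cannot ``absorb $\tfrac12\io u^{s+1}dx$ on the left'': the left-hand side contains only $\frac{d}{dt}\io u^{s+1}dx$, so you would need a Gronwall argument, and the prefactor $\nvb^2$ must then be traded off carefully. Second, the exponent bookkeeping you defer (``matching exponents one finds $\gamma=\frac{m+s}{m-q}$'') does not come out that way by your route: writing $Y=\io u^{s+1}dx$, your interpolation gives $\io u^{2q+s-m}dx\le cY^{\frac{2q+s-m-1}{s}}$, and Young then yields a power of order $\nvb^{\frac{2s}{m-2q+1}}$, not $\nvb^{\frac{m+s}{m-q}}$. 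Since the subsequent Lemma 3.2 and the fixed-point argument hinge on the precise exponent (in particular on $\gamma\to\frac{1}{m-q}$ as $s\to\infty$), this is not a cosmetic discrepancy. The missing idea is the use of the gradient (dissipation) term via Sobolev embedding to gain integrability beyond $L^{s+1}$; with it, the estimate closes for all $m>q>0$ and with the stated exponent.
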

\begin{proof} First remember that 
	\begin{equation}
	\sigma\leq u\in L^\infty(\ot).
	\end{equation}
	Thus for each $r\in \mathbb{R}$, we have
	\begin{equation}
	u^r\in L^2(0,T; W^{1,2}(\Omega)).
	\end{equation}
	Now pick a number 
	\begin{equation}\label{cos1}
	s>\max\{0, m-2q\}.
	\end{equation}
Use $u^s$
as a test function in \eqref{aefu} to derive
\begin{eqnarray}
\lefteqn{\frac{1}{s+1}\frac{d}{dt}\io u^{s+1}dx+ms\io u^{m+s-2}|\nabla u|^2dx}\nonumber\\
&=&s\eta\io (u-\sigma)^{q}u^{s-1}\nabla v\nabla udx\leq s\io u^{q+s-1}|\nabla v\nabla u|dx\nonumber\\
&\leq &\frac{1}{2}ms\io u^{m+s-2}|\nabla u|^2dx+\frac{s\nvb^2}{2m}\io u^{2q-m+s} dx.
\label{t2}
\end{eqnarray}
To estimate the last integral, first notice that
\begin{equation}\label{t3}
\io u^{m+s-2}|\nabla u|^2dx=\frac{4}{(m+s)^2}\io|\nabla u^{\frac{m+s}{2}}|^2dx.
\end{equation}
Recall the Sobolev embedding theorem which states that for each $r\in [1,N)$ there is a positive number $c$ such that
\begin{equation}\label{set}
\|w\|_{\frac{Nr}{N-r}, \Omega}\leq c\|\nabla w\|_{r,\Omega}+c\|w\|_{1,\Omega}\ \ \mbox{for each $w\in W^{1,r}(\Omega)$}.
\end{equation}
We wish to apply this inequality with $w=u^{\frac{m+s}{2}}$ and $r=2$. For this purpose, we further require 
\begin{equation}
\frac{2q-m+s}{m+s}\leq \frac{N}{N-2},
\end{equation}
or equivalently,
\begin{equation}\label{cos2}
s\geq \frac{(2q-m)(N-2)-Nm}{2}.
\end{equation}
We derive from H\"{o}lder's inequality and \eqref{set} that
\begin{eqnarray}
\io u^{2q-m+s}dx&=&\io\left(u^{m+s}\right)^ {\frac{2q-m+s}{m+s}}dx\nonumber\\
&\leq& c\left(\io\left(u^{m+s}\right)^ {\frac{N}{N-2}}dx\right)^{\frac{(2q-m+s)(N-2)}{(m+s)N}}\nonumber\\
&\leq &c\left(\io\left|\nabla u^{\frac{m+s}{2}}\right|^2dx+\left(\io u^{\frac{m+s}{2}} dx\right)^2\right)^{\frac{2q-m+s}{m+s}}\nonumber\\
&\leq & c\left(\io\left|\nabla u^{\frac{m+s}{2}}\right|^2dx\right)^{\frac{2q-m+s}{m+s}}+c\left(\io u^{\frac{m+s}{2}} dx\right)^{\frac{2(2q-m+s)}{m+s}}.\label{s20}
\end{eqnarray}
We integrate \eqref{aefu} over $\Omega$ to get
\begin{equation*}
\frac{d}{dt}\io u dx=0.
\end{equation*}
Subsequently,
\begin{equation}\label{con}
\io u(x,t)dx=\io (\eta u_0(x)+\sigma)dx\leq c\ \ \mbox{for each $t>0$.}
\end{equation}
If we further assume that
\begin{equation}\label{r21}
\frac{m+s}{2}<2q-m+s,
\end{equation}
then we can appeal to the interpolation inequality (\cite{GT}, p.146), thereby deriving
\begin{equation}\label{s30}
\|u\|_{\frac{m+s}{2}, \Omega}\leq \ep\|u\|_{2q-m+s, \Omega}+\frac{1}{\ep^\mu}\|u\|_{1, \Omega}\leq \ep\|u\|_{2q-m+s, \Omega}+\frac{c}{\ep^\mu},
\end{equation}
where $\ep>0,\ \mu=\left(1-\frac{2}{m+s}\right)/\left(\frac{2}{m+s}-\frac{1}{2q-m+s}\right)$.
Condition \eqref{r21} is equivalent to
\begin{equation}\label{cos3}
s>3m-4q.
\end{equation}
Use \eqref{s30} in \eqref{s20} and choose $\ep$ suitably small in the resulting inequality to obtain
\begin{equation}
\io u^{2q-m+s}dx\leq c\left(\io\left|\nabla u^{\frac{m+s}{2}}\right|^2dx\right)^{\frac{2q-m+s}{m+s}}+c.
\end{equation}
Plug this into \eqref{t2} to get
\begin{eqnarray}
\lefteqn{\frac{1}{s+1}\frac{d}{dt}\io u^{s+1}dx+\frac{2ms}{(m+s)^2}\io \left|\nabla u^{\frac{m+s}{2}}\right|^2dx}\nonumber\\
&\leq &c\left(\io\left|\nabla u^{\frac{m+s}{2}}\right|^2dx\right)^{\frac{2q-m+s}{m+s}}\nvb^2+c\nvb^2\nonumber\\
&\leq & \ep \io\left|\nabla u^{\frac{m+s}{2}}\right|^2dx+c(\ep)\nvb^{\frac{m+s}{m-q}}+c\nvb^2.
\end{eqnarray}
The last step is due to the assumption $m>q$ and Young's inequality (\cite{GT}, p. 145). Once again, by taking $\ep$ suitably small, we arrive at
\begin{eqnarray}
\sup_{0\leq t\leq T}\io u^{s+1}dx+\ioT \left|\nabla u^{\frac{m+s}{2}}\right|^2dxdt&\leq& c\nvb^{\frac{m+s}{m-q}}dt+c.
\end{eqnarray}
Here we have used the fact $\frac{m+s}{m-q}>2$ due to \eqref{cos1}. That is to say, the lemma is valid for any $s$ that satisfies \eqref{cos1}, \eqref{cos2}, and \eqref{cos3}. This completes the proof.
\end{proof}

\begin{lemma}\label{h3} Let \textup{(H3)} hold and $s$ be given as in Lemma \ref{uint}. Then  there is a positive number $c$ such that
	\begin{equation}\label{s14}
	\|u\|_{\infty,\ot}\leq c\nvb^{\gamma}+c,
	\end{equation}
	where
	\begin{equation}
	\gamma=\frac{\left[(s+1)(N+2)+N(m-1)^+\right](m+s)+(s+1)N(m-q)(N+2)}{(s+1)(m-q)[(N+2)(s+1)+2N(m-q)]}.
	\end{equation}
	\end{lemma}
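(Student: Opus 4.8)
The plan is to run a Moser iteration (De Giorgi--Nash--Moser type bootstrapping) starting from the energy estimate of Lemma~\ref{uint}, much as in the proof of Proposition~\ref{nunv} but now for the equation \eqref{aefu} satisfied by $u$. Introduce the power $p_k = \tfrac{m+s}{2}\,\chi^k$ for a suitable $\chi>1$ (say $\chi = \tfrac{N+2}{N}$) and test \eqref{aefu} with $u^{2p_k/(m+s)\cdot(m+s)-\text{something}}$ --- more precisely, one tests with a power of $u$ and uses the identity $u^{m+r-2}|\nabla u|^2 = \tfrac{4}{(m+r)^2}|\nabla u^{(m+r)/2}|^2$ together with the chemotaxis term bound $s\int u^{q+r-1}|\nabla v\,\nabla u|\,dx \le \tfrac12\,(\text{diffusion}) + c\,\nvb^2 \int u^{2q-m+r}\,dx$ exactly as in \eqref{t2}. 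Since $m>q$, the exponent $2q-m+r$ is strictly smaller than the ``natural'' exponent $m+r$, so at each stage the Young/H\"older juggling used in Lemma~\ref{uint} (inequalities \eqref{s20}--\eqref{s30}) closes with a loss that is a power of $\nvb$ and a power of $4^k$ (from the iteration).

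Concretely, I would combine the spatial Sobolev embedding \eqref{set} with $r=2$, $w = u^{p_k}$, and the maximal-regularity/parabolic interpolation (controlling $\sup_t \int u^{\text{something}}$ via the energy inequality) to obtain a parabolic Gagliardo--Nirenberg inequality of the form $\|u\|_{\chi (m+s)\chi^{k-1},\,\ot}^{\cdots} \le c\,4^k\,\nvb^{a}\,\big(\sup_t \int u^{(s+1)\chi^{k-1}} + \int\!\!\int |\nabla u^{p_k}|^2\big)$, and then feed the left side back into the next energy estimate. Setting $Y_k := \sup_{0\le t\le T}\int_\Omega u^{(s+1)\chi^k}\,dx + \int_{\ot}|\nabla u^{(m+s)\chi^k/2}|^2$, the recursion takes the shape $Y_k \le c\,C^k\,\nvb^{b}\,Y_{k-1}^{\theta}$ for explicit constants; iterating and sending $k\to\infty$ gives $\|u\|_{\infty,\ot}$ bounded by $Y_0$ (which is controlled by Lemma~\ref{uint}, i.e.\ by $\nvb^{(m+s)/(m-q)} + c$) times a power of $\nvb$, and carefully tracking the geometric series of exponents of $\nvb$ produces precisely the stated exponent $\gamma$. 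The factors $(N+2)$, $N$, and $(m-1)^+$ in the numerator of $\gamma$ are the fingerprints of, respectively, the parabolic Sobolev exponent $\chi=\tfrac{N+2}{N}$, the spatial dimension in the embedding, and the degeneracy/singularity correction when $m>1$ versus $m\le 1$ in handling $u^{m+r-2}$ near the iteration's base level; the denominator is the accumulated product from the geometric iteration of the self-improving exponent $\theta = 1 + \tfrac{2(m-q)}{(N+2)(s+1)}$ type quantity.

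The main obstacle I expect is bookkeeping the exponent of $\nvb$ through the infinitely many iteration steps so that it sums to exactly $\gamma$ rather than merely to ``some finite power.'' At each step the chemotactic term contributes a factor $\nvb^{2\cdot(\text{interpolation weight})}$, and the base step contributes $\nvb^{(m+s)/(m-q)}$ via Lemma~\ref{uint}; one must verify that these form a convergent geometric-type series and identify its sum in closed form. A secondary technical point is that $u$ is only known a priori to lie in $L^\infty(\ot)$ (with the $\sigma$-dependent constant), so all the high-power test functions are legitimate, but the estimates must be $\sigma$- and $\eta$-independent; since the lower bound $u\ge\sigma$ is never used quantitatively and $\eta\in(0,1)$ only helps, this causes no real trouble --- one just has to make sure every constant $c$ depends only on the data. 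Finally, one should double-check that the admissible range of $s$ inherited from Lemma~\ref{uint} (conditions \eqref{cos1}, \eqref{cos2}, \eqref{cos3}) is preserved along the iteration, which it is because only $r = (m+s)\chi^k/2$ grows and the required inequalities are of ``$r$ large'' type.
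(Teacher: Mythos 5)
The heart of this lemma is the explicit exponent $\gamma$, and that is precisely what your outline does not deliver. You set up a Moser-type recursion $Y_k\le c\,C^k\nvb^{b}Y_{k-1}^{\theta}$ and then assert that ``carefully tracking the geometric series\dots produces precisely the stated exponent $\gamma$''; that tracking \emph{is} the content of the lemma, and you leave it undone (you even name it as the main obstacle). There are concrete reasons to doubt the bookkeeping closes as claimed. Because the diffusion is nonlinear, testing with a power $u^{r}$ produces mismatched homogeneities: the time term controls $\sup_t\io u^{r+1}dx$ while the gradient term controls $\nabla u^{(m+r)/2}$, so the parabolic Sobolev step upgrades $L^{r+1}$ to $L^{(r+1)\frac 2N+m+r}$ rather than to $L^{\chi(r+1)}$; the per-step gain is additive as well as multiplicative and depends on $m-1$, so the limit exponent of an infinite cascade is not obviously the rational function of $s$ stated in the lemma. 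Moreover, your heuristics for the ingredients of $\gamma$ do not reflect how they actually arise: in the paper $(m-1)^+$ comes from taking the larger of the two exponents $\frac{(N+2)(m+s)}{N(m-q)}$ and $\frac{m_s(m+s)}{(s+1)(m-q)}$ of $\nvb$ in the bound for $\ioT u^{m_s}dxdt$, and the denominator of $\gamma$ equals $(s+1)(m-q)N(s+1+q_s)$ with $m_s,q_s$ from \eqref{ps}--\eqref{qs} --- quantities attached to one fixed exponent $m_s=\frac{2(s+1)}{N}+m+s$, not to a limit over infinitely many levels. Without the closed-form exponent accounting (or a restatement with whatever exponent $\gamma'(s)$ your scheme does give, together with a check that $\gamma'(s)\to\frac1{m-q}$, which is what the proof of the main theorem uses), the argument is incomplete at its decisive point.

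For comparison, the paper avoids the cascade of powers altogether: it works at the single level $m_s$, runs a De Giorgi truncation with levels $K_n=K-K2^{-n-1}$ and $y_n=\ioT\left[(u-K_n)^+\right]^{m_s}dxdt$, tests \eqref{aefu} with $(u^s-K_{n+1}^s)^+$, derives the recursion \eqref{t12}, and applies Proposition \ref{ynb}; the exponent $\gamma$ then falls out of the explicit choice of $K$ in \eqref{s11} combined with the bound on $\ioT u^{m_s}dxdt$ that follows from Lemma \ref{uint}. If you want to keep your route, you must carry out the Moser exponent accounting explicitly and show it reproduces $\gamma$; otherwise the truncation argument is the shorter path, since it invokes Lemma \ref{uint} only once and requires a single choice of $K$.
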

\begin{proof}
Let
\begin{equation}\label{s5}
K\geq 2(\|u_0\|_{\infty,\Omega}+1)
\end{equation}
be selected as below.
Define
\begin{eqnarray}
K_n=K-\frac{K}{2^{n+1}}, n=0,1\cdots.
\end{eqnarray}	
Obviously,
\begin{equation}\label{s6}
\frac{K}{2}\leq K_n\leq K.
\end{equation}	
Set
\begin{eqnarray}
S_{n}(t)&=&\{x\in\Omega:u(x,t)\geq \kn\},\label{sndef}\\
A_{n}&=&\cup_{0\leq t\leq T}S_{n}(t)=\{(x,t)\in\ot: u(x,t)\geq \kn\}.\label{andef}
\end{eqnarray}
Subsequently,	
\begin{equation}
\int_{0}^{T}|S_{n+1}(t)|dt=|A_{n+1}|.
\end{equation}
To simplify our presentation, we also introduce two parameters
\begin{eqnarray}
m_s&=&(s+1)\frac{2}{N}+m+s,\label{ps}\\
q_s&=&m_s-(2q-m+s),\label{qs}
\end{eqnarray}
where  $s$ is given as in Lemma \ref{uint}, i.e., $s$ is sufficiently large. Then use $\left(u^s-K_{n+1}^s\right)^+$ as a test function in \eqref{aefu} to derive
\begin{eqnarray}
\lefteqn{\frac{d}{dt}\io \int_{0}^{u}\left(\tau^s-K_{n+1}^s\right)^+d\tau dx+ms\int_{S_{n+1}(t)}u^{m+s-2}|\nabla u|^2dx}\nonumber\\
&=&s\eta\int_{S_{n+1}(t)}(u-\sigma)^qu^{s-1}\nabla v\nabla udx
\leq s\int_{S_{n+1}(t)}u^{q+s-1}|\nabla v||\nabla u|dx.\ \label{s1}
\end{eqnarray}
After a suitable application of Cauchy's inequality (\cite{LSU}, p. 58), we integrate to obtain 
\begin{eqnarray}
\lefteqn{\sup_{0\leq t\leq T}\io \int_{0}^{u}\left(\tau^s-K_{n+1}^s\right)^+d\tau dx+\int_{A_{n+1}}\left|\nabla u^{\frac{m+s}{2}}\right|^2dxdt}\nonumber\\
&\leq& c\int_{A_{n+1}}u^{2q-m+s}|\nabla v|^2dxdt
\leq c\nvb^2\int_{A_{n+1}}u^{2q-m+s}dxdt\label{t1}.
\end{eqnarray}
Since $s>1$, we have
\begin{eqnarray}
\int_{K_{n+1}}^{u}\left(\tau^s-K_{n+1}^s\right)^+d\tau\chi_{A_{K_{n+1}}}
&\geq & \int_{K_{n+1}}^{u}\left[(\tau-K_{n+1})^+\right]^sd\tau\nonumber\\
&=&\frac{1}{s+1}\left[(u-K_{n+1})^+\right]^{s+1}.\label{t10}
\end{eqnarray}
Recall that $m_s=(s+1)\frac{2}{N}+m+s$. We estimate, with the aid of H\"{o}lder's inequality and \eqref{set}, that
\begin{eqnarray}
y_{n+1}&\equiv&\int_{0}^{T}\io\left[(u-K_{n+1})^+\right]^{m_s}dxdt\nonumber\\
&\leq &\int_{0}^{T}\left(\io\left[(u-K_{n+1})^+\right]^{s+1}dx\right)^{\frac{2}{N}}\left(\io\left[(u-K_{n+1})^+\right]^{\frac{(m+s)N}{N-2}}\right)^{\frac{N-2}{N}}dt\nonumber\\
&\leq &c\left(\sup_{0\leq t\leq T}\io\left[(u-K_{n+1})^+\right]^{s+1}dx\right)^{\frac{2}{N}}\nonumber\\
&&\cdot\int_{0}^{T}\left(\io\left|\nabla \left[(u-K_{n+1})^+\right]^{\frac{m+s}{2}}\right|^2dx+\left(\io\left[(u-K_{n+1})^+\right]^{\frac{m+s}{2}}dx\right)^2\right)dt\label{s3}
\end{eqnarray}
We can easily verify that
\begin{eqnarray}
\left|\nabla \left[(u-K_{n+1})^+\right]^{\frac{m+s}{2}}\right|&=&\frac{m+s}{2}\left[(u-K_{n+1})^+\right]^{\frac{m+s}{2}-1}|\nabla u|\nonumber\\
&\leq &\frac{m+s}{2}u^{\frac{m+s}{2}-1}|\nabla u|\chi_{S_{n+1}(t)}=\left|\nabla u^{\frac{m+s}{2}}\right|\chi_{S_{n+1}(t)},\label{m1}\\
\io\left[(u-K_{n+1})^+\right]^{\frac{m+s}{2}}dx
&\leq &\left(\io\left[(u-K_{n+1})^+\right]^{m_s}dx\right)^{\frac{m+s}{2m_s}}|S_{n+1}(t)|^{1-\frac{m+s}{2m_s}}.\label{m2}
\end{eqnarray}
The latter yields
\begin{eqnarray}
\lefteqn{\int_{0}^{T}\left(\io\left[(u-K_{n+1})^+\right]^{\frac{m+s}{2}}dx\right)^2dt}\nonumber\\
&\leq&\int_{0}^{T}\left(\io\left[(u-K_{n+1})^+\right]^{m_s}dx\right)^{\frac{m+s}{m_s}}|S_{n+1}(t)|^{2-\frac{m+s}{m_s}}dt\nonumber\\
&\leq &\left(\ioT\left[(u-K_{n+1})^+\right]^{m_s}dxdt\right)^{\frac{m+s}{m_s}}\left(\int_{0}^{T}|S_{n+1}(t)|^{1+\frac{Nm_s}{2(s+1)}}dt\right)^{\frac{2(s+1)}{Nm_s}}\nonumber\\
&\leq &c |A_{n+1}|^{\frac{2(s+1)}{Nm_s}}y_n^{\frac{m+s}{m_s}}.\label{m3}
\end{eqnarray}
Here we have used the fact that $\{y_n\}$ is a decreasing sequence.
Use \eqref{m1} and \eqref{m3} in \eqref{s3} and take \eqref{t1} into account to derive
\begin{eqnarray}
y_{n+1}
&\leq &c\nvb^{\frac{2(N+2)}{N}}\left(\int_{A_{n+1}}u^{2q-m+s}dxdt\right)^{\frac{N+2}{N}}\nonumber\\
&&+c\nvb^{\frac{4}{N}}\left(\int_{A_{n+1}}u^{2q-m+s}dxdt\right)^{\frac{2}{N}}|A_{n+1}|^{\frac{2(s+1)}{Nm_s}}y_n^{\frac{m+s}{m_s}}.\label{t4}
\end{eqnarray}
The first integral on the right-hand side of \eqref{t4} can be estimated as follows:
\begin{eqnarray}
\left(\int_{A_{n+1}}u^{2q-m+s}dxdt\right)^{\frac{N+2}{N}}
&=&\kno^{\frac{(N+2)(2q-m+s)}{N}}\left(\int_{A_{n+1}}\left(\frac{u}{\kno}\right)^{2q-m+s}dxdt\right)^{\frac{N+2}{N}}\nonumber\\
&\leq &\frac{1}{\kno^{\frac{(N+2)m_s}{N}-\frac{(N+2)(2q-m+s)}{N}}}\left(\int_{A_{n+1}}u^{m_s}dxdt\right)^{\frac{N+2}{N}}\nonumber\\
&= &\frac{1}{\kno^{\frac{(N+2)q_s}{N}}}\left(\int_{A_{n+1}}u^{m_s}dxdt\right)^{1+\frac{2}{N}}.\label{t5}
\end{eqnarray}
Similarly,
\begin{eqnarray}
\left(\int_{A_{n+1}}u^{2q-m+s}dxdt\right)^{\frac{2}{N}}
&\leq & \frac{1}{\kno^{\frac{2q_s}{N}}}\left(\int_{A_{n+1}} u^{m_s}dxdt\right)^{\frac{2}{N}}.\label{t7}
\end{eqnarray}
Recall that
\begin{eqnarray}
K_{n+1}-K_{n}=\frac{K}{2^{n+2}},\ \ \frac{K_{n+1}-K_{n}}{\kno}=\frac{1}{2^{n+2}-1}>\frac{1}{2^{n+2}}.\nonumber
\end{eqnarray}
With the aid of the preceding two results, we obtain 
\begin{eqnarray}
y_n&\geq& \int_{A_{n+1}}\left[(K_{n+1}-K_{n})^+\right]^{m_s}dxdt=\frac{K^{m_s}}{2^{(n+2)m_s}}|A_{n+1}|,\label{t11}\\
y_n&\geq&\int_{A_{n+1}}u^{m_s}\left[\left(1-\frac{K_{n}}{u}\right)^+\right]^{m_s}dxdt\nonumber\\
&\geq &\int_{A_{n+1}}u^{m_s}\left(1-\frac{K_{n}}{K_{n+1}}\right)^{m_s}dxdt
\geq \frac{1}{2^{(n+2)m_s}}\int_{A_{n+1}}u^{m_s}dxdt.\label{t6}
\end{eqnarray}
By \eqref{t11},
\begin{eqnarray}
|A_{n+1}|^{\frac{2(s+1)}{Nm_s}}
&\leq &\frac{2^{\frac{2(n+2)(s+1)}{N}}}{K^{\frac{2(s+1)}{N}}}y_n^{\frac{2(s+1)}{Nm_s}}.\nonumber
\end{eqnarray}
Keeping this, \eqref{t7}, \eqref{s6}, and \eqref{t6} in mind, we derive from \eqref{t4} that
\begin{eqnarray}
y_{n+1}&\leq &\frac{c\nvb^{\frac{2(N+2)}{N}}2^{\frac{(n+2)m_s(N+2)}{N}}}{\kno^{\frac{(N+2)q_s}{N}}}y_n^{1+\frac{2}{N}}\nonumber\\
&&+
\frac{c2^{\frac{\left[2(s+1)+2m_s\right](n+2)}{N}}\nvbn^{\frac{4}{N}}}{K^{\frac{2(s+1)}{N}+\frac{2q_s}{N}}}y_n^{1+\frac{2}{N}}\nonumber\\
&\leq &cb^n\left(\frac{\nvb^{\frac{2(N+2)}{N}}}{K^{\frac{(N+2)q_s}{N}}}+\frac{\nvbn^{\frac{4}{N}}}{K^{\frac{2(s+1)+2q_s}{N}}}\right)y_n^{1+\frac{2}{N}},\label{t12}
\end{eqnarray}
where \begin{equation*}
b=\max\left\{2^{\frac{m_s(N+2)}{N}}, 2^{\frac{2(s+1)+2m_s}{N}}\right\}.
\end{equation*}
We can easily check from \eqref{ps} and \eqref{qs} that
\begin{equation*}
(N+2)q_s\geq 2(s+1)+2q_s\ \ \mbox{if and only if }\ \ m\geq q.
\end{equation*}
Recall that $\kn\geq 1$. Thus if (H3) holds, we can deduce from \eqref{t12} that
\begin{equation*}
y_{n+1}\leq \frac{cb^n\left(\nvb^{\frac{2(N+2)}{N}}+\nvb^{\frac{4}{N}}\right)}{K^{\frac{2(s+1)+2q_s}{N}}}y_n^{1+\frac{2}{N}}.
\end{equation*}
According to Proposition \ref{ynb}, if we choose $K$ so large that
\begin{eqnarray}
y_0&=&\ioT\left[\left(u-\frac{K}{2}\right)^+\right]^{m_s}dxdt
\leq \ioT u^{m_s}dxdt\nonumber\\
&\leq& \frac{cK^{s+1+q_s}}{\nvb^{N+2}+\nvb^{2}}
,\nonumber
\end{eqnarray}
then
\begin{equation*}
\sup_{\ot} u\leq K.
\end{equation*}
In view of \eqref{s5}, it is enough for us to take
\begin{eqnarray}
K&=&c\left(\ioT u^{m_s}dxdt\right)^{\frac{1}{s+1+q_s}}\left(\nvb^{N+2}+\nvb^{2}\right)^{\frac{1}{s+1+q_s}}\nonumber\\
&&+2\|u_0\|_{\infty,\Omega}+2.\label{s11}
\end{eqnarray}
In light of \eqref{s3}, \eqref{m3}, and \eqref{b12}, we have
\begin{eqnarray}
\ioT u^{m_s}dxdt&\leq &\left(\sup_{0\leq t\leq T}\io u^{s+1}dx\right)^{\frac{2}{N}}\ioT\left|\nabla u^{\frac{m+s}{2}}\right|^2dxdt\nonumber\\
&&+c\left(\sup_{0\leq t\leq T}\io u^{s+1}dx\right)^{\frac{2}{N}}\left(\ioT u^{m_s}dxdt\right)^{\frac{m+s}{m_s}}\nonumber\\
&\leq &c\nvb^{\frac{(N+2)(m+s)}{N(m-q)}}+c+\ep\ioT u^{m_s}dxdt+c(\ep)\left(\sup_{0\leq t\leq T}\io u^{s+1}dx\right)^{\frac{m_s}{s+1}}.\nonumber
\end{eqnarray}
Choosing $\ep$ suitably small, we arrive at
\begin{equation*}
\ioT u^{m_s}dxdt\leq c\nvb^{\frac{(N+2)(m+s)}{N(m-q)}}+c\nvb^{\frac{m_s(m+s)}{(s+1)(m-q)}}+c.
\end{equation*}
Substituting this into \eqref{s11} yields
\begin{eqnarray}
\ub&\leq &c\left[\left(\nvb^{\frac{(N+2)(m+s)}{N(m-q)}}+\nvb^{\frac{m_s(m+s)}{(s+1)(m-q)}}+1\right)\left(\nvb^{N+2}+\nvb^{2}\right)\right]^{\frac{1}{s+1+q_s}}+c\nonumber\\
&\leq &c\left[\left(\nvb^{\left(\frac{N+2}{N}+\frac{(m-1)^+}{s+1}\right)\frac{m+s}{m-q}}+1\right)\left(\nvb^{N+2}+1\right)\right]^{\frac{1}{s+1+q_s}}+c\nonumber\\
&\leq &c\left(\nvb^{\frac{\left[(s+1)(N+2)+N(m-1)^+\right](m+s)}{(s+1)N(m-q)}+N+2}+1\right)^{\frac{1}{s+1+q_s}}+c\nonumber\\
&\leq &c\nvb^{\frac{\left[(s+1)(N+2)+N(m-1)^+\right](m+s)+(s+1)N(m-q)(N+2)}{(s+1)N(m-q)(s+1+q_s)}}+c.\nonumber
\end{eqnarray}
This together with \eqref{qs} implies the lemma.
\end{proof}

\begin{proof}[Proof of Theorem \ref{mth} under (H3).] We wish to show
	\begin{equation}\label{mrs}
	\|v\|_{L^{\infty}(0,T; W^{1,\infty}(\Omega))}+\| u\|_{\infty,\Omega\times[0,T]}	\leq c.
	\end{equation}
	Let $\gamma $ be given as in Lemma \ref{h3}. Note that
	\begin{equation*}
	\lim_{s\rightarrow\infty}\gamma=\lim_{s\rightarrow\infty}\frac{\left[(s+1)(N+2)+N(m-1)^+\right](m+s)+(s+1)N(m-q)(N+2)}{(s+1)(m-q)[(N+2)(s+1)+2N(m-q)]}=\frac{1}{m-q}.
	\end{equation*}
If $\frac{1}{m-q}>1$, then there is a $\beta>0$ such that
	\begin{equation}
	\gamma=1+\beta\ \ \mbox{for some suitably large $s$}.
	\end{equation}	Fix this $s$ and let $p$ be given as in Proposition \ref{nunv}. 
We can derive from \eqref{fr1} and Lemma \ref{h3}
	\begin{eqnarray}
	\|v\|_{L^{\infty}(0,T; W^{1,\infty}(\Omega))}+\ub
	&\leq &c\|u\|_{2p,\ot}+c\nvb^{1+\beta}+c
\nonumber\\
&\leq &cT^{\frac{1}{2p}}\|u\|_{\infty,\ot}+c\|u\|_{2p,\ot}^{1+\beta}+c	\nonumber\\
&\leq &cT^{\frac{1}{2p}}\nvb^{1+\beta}+cT^{\frac{1+\beta}{2p}}\ub^{1+\beta}+c\nonumber\\
&\leq &cT^{\frac{1}{2p}}\left(\|v\|_{L^{\infty}(0,T; W^{1,\infty}(\Omega))}+\ub\right)^{1+\beta}+c.\label{s12}
	\end{eqnarray}
	Here we have used the fact that $T\leq 1$.
	Set 
	\begin{equation}
	h(\tau)	=\|v\|_{L^{\infty}(0,\tau; W^{1,\infty}(\Omega))}+\| u\|_{\infty,\Omega\times[0,\tau]}.
	\end{equation}
	Let $T_0\in (0, T]$ be selected below. It follows from \eqref{s12} that
	\begin{equation}
	h(\tau)\leq cT_0^{\frac{1}{2p}}	h^{1+\beta}(\tau)+c\ \ \mbox{for each $\tau\in[0,T_0]$.}
	\end{equation}
It is not difficult for us to see from the proof of Proposition \ref{nunv} that $\nabla v$ is actually H\"{o}lder continuous on $\overline{\ot}$, so is $u$ for each fixed $\sigma>0$. Thus $h(\tau)$ is a continuous function of $\tau$.	In view of Proposition \ref{prop2.2}, if we choose $T_0$ so that
	\begin{equation}\label{s13}
	cT_0^{\frac{1}{2p}}\leq \frac{\beta^\beta}{(c+\beta)^\beta(1+\beta)^{1+\beta}}\ \ \mbox{and}\ \ \|\nabla (v_0+1)\|_{W^{1,\infty}(\Omega)}+\|u_0+1\|_{\infty,\Omega}\leq \frac{1}{\left[cT_0^{\frac{1}{2p}}(1+\beta)\right]^{\frac{1}{\beta}}}
	\end{equation}
	then
	\begin{equation}
	\|\nabla v\|_{\infty,\Omega\times[0,T_0]}\leq \frac{1}{\left[cT_0^{\frac{1}{2p}}(1+\beta)\right]^{\frac{1}{\beta}}}.
	\end{equation}
	By setting $T=0$ in \eqref{s12}, we see that $\|\nabla (v_0+1)\|_{W^{1,\infty}(\Omega)}+\|u_0+1\|_{\infty,\Omega}\leq c$.
	If we take
	\begin{equation}
	cT_0^{\frac{1}{2p}}=\frac{\beta^\beta}{(c+\beta)^\beta(1+\beta)^{1+\beta}},
	\end{equation}
	then the second inequality in \eqref{s13} is automatically satisfied. Upon doing so, we arrive at
	\begin{equation}\label{s15}
\|v\|_{L^{\infty}(0,T_0; W^{1,\infty}(\Omega))}+\| u\|_{\infty,\Omega\times[0,T_0]}	\leq\frac{(c+\beta)(1+\beta)}{\beta}.
	\end{equation}
	Set $k=\lfloor\frac{T}{T_0}\rfloor$, the integer part of the number $\frac{T}{T_0}$. 
	If $k\geq 1$, we consider 
	\begin{equation}
	u_{T_0}(x,t)=u(t+T_0,x),\ \ 	v_{T_0}(x,t)=v(t+T_0,x) \ \ \mbox{on $[0, T_0]$.}
	\end{equation} 
	Obviously, $(u_{T_0}, v_{T_0})$ satisfies the same conditions as $(u, v)$ does on $\Omega\times(0,T_0)$. Thus we can repeat the previous arguments  to yield \eqref{s15} for $(u_{T_0}, v_{T_0})$. After a finite number of steps, we obtain \eqref{mrs}. Of course, in the last step, we will have to use $\min\{T_0, T-kT_0\}$ instead of $T_0$.
	
	If $\frac{1}{m-q}<1$, an application of Young's inequality is enough to reach \eqref{mrs}.

	If $\frac{1}{m-q}=1$, this can also be handled easily.  We verify that $\frac{d\gamma}{ds}$ changes signs at most three times. Thus either $\gamma $ decreases toward 1 as $s\rightarrow \infty$, which can be treated like the first case, or  $\gamma $ increases toward 1 as $s\rightarrow \infty$, which is essentially the second case.
	
Clearly, \eqref{uub} is a consequence of \eqref{mrs}. Thus we can conclude from the Leray-Schauder fixed point theorem that  \eqref{cuv1}-\eqref{cuv4} has a solution. Denote the solution by $(\us, \vs)$. In view of \eqref{usp}, we can rewrite \eqref{cuv1}-\eqref{cuv4} as
	\begin{eqnarray}
	\pt \us-m\mdiv\left((\us+\sigma)^{m-1}\nabla \us\right)&=&-\mdiv\left(\us^q\nabla \vs\right)\ \ \mbox{in $\ot$,}\label{acuv1}\\
	\pt \vs-\Delta \vs+\vs&=&\us\ \ \mbox{in $\ot$,}\label{acuv2}\\
	\frac{\partial \us}{\partial\mathbf{n}}&=&\frac{\partial \vs}{\partial\mathbf{n}}=0\ \ \mbox{on $\Sigma_T$,}\label{acuv3}\\
	(\us,\vs)\mid_{t=0}&=&(u_0, v_0)\ \ \mbox{on $\Omega$.}\label{acuv4}
	\end{eqnarray}
Furthermore,
	\begin{eqnarray}
	\us\geq 0,\ \ \vs\geq 0,\ \ \mbox{and}\ \ \|\vs\|_{L^\infty(0,T; W^{1,\infty}(\Omega))}+\|\us\|_{\infty,\ot}\leq c.\nonumber
	\end{eqnarray}
	We wish to show that we can take $\sigma \rightarrow 0$ in \eqref{acuv1}-\eqref{acuv4}.
	 For this purpose, we use $(\us+\sigma)^m$ as a test function in \eqref{acuv1} to derive
	 \begin{equation}
	 \frac{1}{m+1}\sup_{0\leq t\leq T}\io(\us+\sigma)^{m+1}dx+\ioT\left|\nabla(\us+\sigma)^m\right|^2dxdt\leq c.
	 \end{equation}
	 We compute
	 \begin{eqnarray}
	 \pt (\us+\sigma)^{m+1}&=&(m+1)(\us+\sigma)^{m}\pt\us\nonumber\\
	 &=&(m+1)\mdiv\left((\us+\sigma)^{m}\nabla(\us+\sigma)^{m}\right)-(m+1)\left|\nabla(\us+\sigma)^{m}\right|^2\nonumber\\&&-(m+1)\mdiv\left((\us+\sigma)^{m}\us^q\nabla \vs\right)+(m+1)\us^q\nabla \vs\cdot\nabla(\us+\sigma)^{m},\nonumber\\
	 \nabla(\us+\sigma)^{m+1}&=&\frac{m+1}{m}(\us+\sigma)\nabla(\us+\sigma)^{m}.\nonumber
	 \end{eqnarray}
	 Thus the sequence $\{\partial_t(\us+\sigma)^{m+1}\}$ is bounded in $L^2\left(0, T; \left(W^{1,2}(\Omega)\right)^*\right)+L^1(\ot)\equiv\{\psi_1+\psi_2:\psi_1\in L^2\left(0, T; \left(W^{1,2}(\Omega)\right)^*\right), \psi_2\in L^1(\ot) \}$ and the sequence $\{(\us+\sigma)^{m+1}\}$ is bounded in $L^2\left(0, T; W^{1,2}(\Omega)\right)$. This puts us in a position to apply the Lions-Aubin lemma \cite{SI}. Upon doing so, we obtain the precompactness of $\{(\us+\sigma)^{m+1}\}$ in $L^2(\ot)$. We can extract a subsequence of $\{\us+\sigma\}$, still denoted by $\{\us+\sigma\}$, such that $\us+\sigma$ converges a.e. on $\ot$. This is enough to justify passing to the limit in \eqref{acuv1}-\eqref{acuv4}.
	 The proof is complete.
	\end{proof}

We would like to remark that as $m\rightarrow q^+$ the upper bound in \eqref{s15} deteriorates. This foretells the possibility that solutions blow up if $m=q$. 
	
\begin{proof}[Proof of Theorem \ref{mth} under (H4).] We will show that an estimate like \eqref{s14} remains true even without the benefit of Lemma \ref{uint}. Let $s$ be given as before, i.e., $s$ is large enough.
With the aid of (H4), we can derive from \eqref{t12} that
\begin{eqnarray}
y_{n+1}&\leq &\frac{cb^n\left(\nvb^{\frac{2(N+2)}{N}}+\nvb^{\frac{4}{N}}\right)}{K^{\frac{(N+2)q_s}{N}}}y_n^{1+\frac{2}{N}}.\nonumber
\end{eqnarray}
In light of Proposition \ref{ynb}, if $K$ is so chosen that
\begin{eqnarray}
y_0
&\leq &\frac{cK^{\frac{(N+2)q_s}{2}}}{\nvb^{N+2}+\nvb^{2}},\nonumber
\end{eqnarray}
then
\begin{equation}\label{s17}
\sup_{\ot}u\leq K.
\end{equation}
In view of \eqref{s5}, it is enough for us to take
\begin{eqnarray}
K&=&c\left(\ioT u^{m_s}dxdt\right)^{\frac{2}{(N+2)q_s}}\left(\nvb^{N+2}+\nvb^{2}\right)^{\frac{2}{(N+2)q_s}}\nonumber\\
&&+2\|u_0\|_{\infty,\Omega}+2.\label{s18}
\end{eqnarray}
If 
\begin{equation}
\frac{2m_s}{(N+2)q_s}<1,
\end{equation}
or equivalently,
\begin{equation*}
 q<1 \ \ \mbox{and}\ \  m> q+\frac{q-1}{N+1},
\end{equation*}
then Young's inequality asserts
\begin{eqnarray}
K&\leq &\ep \|u\|_{m_s,\ot} +c(\ep)\left(\nvb^{N+2}+\nvb^{2}\right)^{\frac{2}{(N+2)q_s-2m_s}}\nonumber\\
&&+2\|u_0\|_{\infty,\Omega}+2.\nonumber
\end{eqnarray}
Use this in \eqref{s17} to derive
\begin{equation}\label{q1}
\ub\leq c\nvb^{\frac{N+2}{(N+1)m-(N+2)q+1}}+c\|u_0\|_{\infty,\Omega}+c.
\end{equation}
If
\begin{equation}
\frac{2m_s}{(N+2)q_s}= 1,
\end{equation}
 we can appeal to the interpolation inequality (\cite{GT}, p. 146) to obtain
\begin{equation}
\|u\|_{m_s,\ot}\leq \ep \|u\|_{\infty,\ot}+\frac{1}{\ep^{m_s-1}}\|u\|_{1,\ot}\leq\ep \|u\|_{\infty,\ot}+\frac{c}{\ep^{m_s-1}}.
\end{equation}
With this in mind, we derive from \eqref{s18} that
\begin{eqnarray}
K&\leq&c\left(\ep \|u\|_{\infty,\ot}+\frac{c}{\ep^{m_s-1}}\right)\left(\nvb^{N+2}+\nvb^{2}\right)^{\frac{2}{(N+2)q_s}}\nonumber\\
&&+2\|u_0\|_{\infty,\Omega}+2\nonumber\\
&=&\alpha\ub+\frac{c}{\alpha^{m_s-1}}\left(\nvb^{N+2}+\nvb^{2}\right)^{\frac{2m_s}{(N+2)q_s}}\nonumber\\
&&+2\|u_0\|_{\infty,\Omega}+2.\nonumber
\end{eqnarray}
Plug this into \eqref{s17} and choose $\alpha$ suitably small in the resulting inequality to derive
\begin{equation}\label{q2}
\ub\leq c\nvb^{N+2}+c\|u_0\|_{\infty,\Omega}+c.
\end{equation}
The rest of the proof is similar to that under (H3). That is, \eqref{mrs} can be inferred from either \eqref{q1} or \eqref{q2}.
\end{proof}
	
\end{document}